\newdimen\myMargin
\newcommand{\A}{{\cal A}}
\def\B{{\cal B}}
\newcommand{\Fq}{\mathbb F_q}
\newcommand{\F}{\mathbb F}
\newtheorem{theorem}{Theorem}
\newtheorem{lemma}[theorem]{Lemma}
\newtheorem{corollary}[theorem]{Corollary}
\newtheorem{proposition}[theorem]{Proposition}
\newtheorem{definition}[theorem]{Definition}
\newtheorem{problem}[theorem]{Problem}
\theoremstyle{remark}
\theoremstyle{remark}\newtheorem{remark}[theorem]{Remark}
\title{Primitive idempotents in central simple algebras over $\mathbb{F}_q(t)$ with an application to coding theory\footnote{Research partially supported by grant {MTM2016-78364-P} from {Agencia Estatal de Investigaci\'{o}n (AEI) and from Fondo Europeo de Desarrollo Regional (FEDER)}}}
\author{J. G\'omez-Torrecillas\footnote{CITIC and Department of Algebra, 
University of Granada, E18071, Granada, Spain}  \and 
P. Kutas\footnote{School of Computer Science, University of Birmingham, B15 2TT, Birmingham, United Kingdom} \and 
F. J. Lobillo\footnote{CITIC and Department of Algebra, 
University of Granada, E18071, Granada, Spain} \and 
G. Navarro\footnote{CITIC and Department of Computer Science and A. I., 
University of Granada, E18071, Granada, Spain} }
\begin{document}

\maketitle

\begin{abstract}
We consider the algorithmic problem of computing a primitive idempotent of a central simple algebra over the field of rational functions over a finite field. The algebra is given by a set of structure constants. The problem is reduced to the computation of a division algebra Brauer equivalent to the central simple algebra. This division algebra is constructed as a cyclic algebra, once the Hasse invariants have been computed. We give an application to skew constacyclic convolutional codes. 

\end{abstract}

\section{Introduction}

We consider the following algorithmic problem. Let $\F_q$ be the finite field with $q$ elements and let $\A$ be a central simple algebra over $\F_q(t)$ (the field of rational functions in the variable $t$) with $\F_q(t)$-basis $b_1,\dots,b_{n^2}$. Then one has that 
$$b_ib_j=\sum_{k=1}^{n^2}\gamma_{ijk}b_k$$
where $\gamma_{ijk}\in\F_q(t)$. The $\gamma_{ijk}$ are called structure constants. We consider $\A$ to be given as a collection of structure constants. The task is to find a primitive idempotent in $\A$. This problem is closely related to the factorization problem of Ore polynomials with  coefficients in $\F_q(t)$ \cite{GLN3}. In \cite{IKR} the split case, namely where $\A\cong M_n(\F_q(t))$,  is studied. Here we investigate the problem when $\A\cong M_k(D)$ where $D$ is a division algebra over $\F_q(t)$. 

In Section \ref{sec:isomorphism} we reduce the problem of computing a primitive idempotent in $\A$ to computing $D$, the division algebra Brauer equivalent to $\A$ (building on the algorithm from \cite{IKR}). A central simple algebra over $\F_q(t)$ is determined (up to Brauer equivalence) by its Hasse invariants (see, e.g., \cite[Corollary 6.5.4]{Gille/Szamuely:2006}). This means that by computing the Hasse invariants of $\A$ and constructing a division algebra with those invariants provides a method for calculating the underlying division algebra of $\A$. 

In \cite{BG} the authors propose a randomized polynomial time algorithm for constructing an $\F_q(t)$-division algebra provided that the invariant at infinity is zero and the degree of the algebra is coprime to $q$. We propose an algorithm where the invariant at infinity is not necessarily zero when $\F_q$ contains the $n$th roots of unity. Here $n$ is the degree of the division algebra. When the degree of $\A$ is coprime to $q$, these algorithms can also be used to compute the Hasse invariants of $\A$. 

We also give an application of our results. Linear convolutional codes of length $n$ can be modeled (see \cite{Forney:1970, Johannesson/Zingangirov:1999, Rosenthal/Smarandache:1999}) as vector subspaces of $\F_q(t)^n$, where the variable $t$ represents the delay operator.  Based on this model, an approach to cyclic convolutional codes was proposed in \cite{Gomez/alt:2016}. So, given an automorphism $\sigma$ of $\F_q(t)$,  a skew cyclic convolutional code is a left ideal of a cyclic algebra $(\F_q(t),\sigma,1)$, endowed with the Hamming metric induced by the natural basis of $(\F_q(t),\sigma,1)$. These codes became MDS for the Hamming distance, and efficient algebraic decoding algorithms were designed for them \cite{GLN2,Gomez/alt:2018a}.  A natural question is what can be done when the skew cyclic structure is given by a cyclic algebra of the form $(\F_q(t),\sigma,\lambda)$, for a general $\lambda \in \Fq(t)^\sigma$.  This lead to the notion of skew constacyclic code. We will show that, if we know an explicit algebra isomorphism  $(\F_q(t),\sigma,\lambda) \cong M_k(D)$, where $D$ is a division algebra over $\F_q(t)^\sigma$, then the construction of skew convolutional codes and the decoding algorithms from \cite{Gomez/alt:2016, GLN2} can be extended to skew constacyclic convolutional codes.

The structure of the paper is as follows. In Section 2 we recall some basic facts about quaternion and symbol algebras. In Section 3 we provide randomized polynomial time algorithms for computing quaternion and symbol algebras with given invariants. In Section 4 we show how one can compute a division algebra $D$ Brauer equivalent to a given central simple $\F_q(t)$-algebras, and an explicit isomorphism with the corresponding matrix ring over $D$, using either the algorithms from Section 3 or the algorithm from \cite{BG}. In Section 5 we construct constacyclic convolutional codes of designed Hamming distance and propose a polynomial time decoding algorithm.

\section{Quaternion and symbol algebras with prescribed invariants}

 Let $K$ be a field such that the multiplicative group $K^*$ contains a cyclic group of order $n$, and let $\epsilon \in K$ be a primitive $n$--th root of unity $\epsilon$. Choose $a, b \in K^*$. The \emph{symbol algebra} (or power residue algebra) $(a,b;K,\epsilon)$ is the $K$--algebra with generators $u, v$ subject to the relations 

$$u^n=a,~ v^n=b,~ uv=\epsilon vu.$$ 
When $n = 2$ (and, hence, $K$ must be of characteristic different from $2$), symbol algebras are called \emph{quaternion algebras}.

Symbol algebras are central simple $K$--algebras \cite[Chapter 11, Theorem 1]{Draxl}.

\subsection{Quaternion algebras}
In this subsection we propose a randomized polynomial-time algorithm which constructs a quaternion algebra over $\F_q(t)$ with $q$ an odd prime power which ramifies at prescribed places. 

First we cite an estimate on the number of irreducible polynomials in a given residue class. This is an analogue of Dirichlet's theorem on primes in arithmetic progressions. However, in the function field case a much stronger result is true:

\begin{proposition}\cite[Theorem 5.1.]{Wan}\label{number}
Let $a,m\in\mathbb{F}_q[t]$ be such that $deg(m)>0$ and the $gcd(a,m)=1$. Let $N$ be a positive integer and let  
$$S_N(a,m)=\#\{f\in\mathbb{F}_q[t]~\text{monic irreducible}~|~f\equiv a ~(mod ~m),~ deg(f)=N\}.$$
Let $M=deg(m)$ and let $\Phi(m)$ denote the number of polynomials in $\mathbb{F}_q[t]$ relative prime to $m$ whose degree is smaller than M. Then we have the following inequality:
\begin{equation*}
|S_N(a,m)-\frac{q^N}{\Phi(m)N}|\leq \frac{1}{N}(M+1)q^{\frac{N}{2}}.
\end{equation*}
\end{proposition}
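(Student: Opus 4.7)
The plan is to derive the estimate by combining the orthogonality of Dirichlet characters modulo $m$ with the Riemann hypothesis for Dirichlet $L$-functions over function fields (Weil's theorem). Write $G = (\F_q[t]/m)^*$, of order $\Phi(m)$, and for each character $\chi \colon G \to \C^*$ extend $\chi$ to a completely multiplicative function on monic polynomials by setting $\chi(f) = 0$ when $\gcd(f,m) \neq 1$. Orthogonality then gives
$$S_N(a,m) \;=\; \frac{1}{\Phi(m)} \sum_{\chi \bmod m} \overline{\chi(a)} \sum_{\substack{P \text{ monic irred} \\ \deg P = N}} \chi(P).$$
The trivial character $\chi_0$ is responsible for the main term: the inner sum counts monic irreducibles of degree $N$ not dividing $m$, and by the prime polynomial theorem this equals $q^N/N + O(q^{N/2}/N)$, with at most $M/N$ primes of degree $N$ removed because they divide $m$.

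Next, I would control the inner sum for each non-trivial character $\chi$ through its $L$-function
$$L(s,\chi) \;=\; \sum_{f \text{ monic}} \chi(f)\,|f|^{-s} \;=\; \prod_{P \text{ monic irred}} \bigl(1 - \chi(P)\,|P|^{-s}\bigr)^{-1}.$$
With $u = q^{-s}$, a standard argument shows that $L(s,\chi)$ is a polynomial in $u$ of degree at most $M-1$, so we may write $L(s,\chi) = \prod_{j=1}^{d}(1-\alpha_j u)$ with $d \leq M-1$. Weil's theorem (the Riemann hypothesis for curves over finite fields) yields $|\alpha_j| \leq q^{1/2}$. Taking the logarithmic derivative of the Euler product and of the polynomial factorisation and matching coefficients of $u^N$ produces the explicit formula
$$\sum_{k \deg P = N} \deg(P)\,\chi(P)^k \;=\; -\sum_{j=1}^{d} \alpha_j^N,$$
whose modulus is at most $(M-1)\,q^{N/2}$. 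The contribution from proper prime powers ($k \geq 2$) is bounded by $\sum_{k \geq 2} q^{N/k} = O(q^{N/2})$, so the remaining monic-irreducible part satisfies
$$\Bigl|\sum_{\substack{P \text{ monic irred} \\ \deg P = N}} \chi(P)\Bigr| \;\leq\; \frac{(M-1)\,q^{N/2}}{N} + O\Bigl(\frac{q^{N/2}}{N}\Bigr).$$

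Finally, I would assemble the estimates. There are $\Phi(m)-1$ non-trivial characters, so after dividing by $\Phi(m)$ their combined contribution is bounded by $(M-1)\,q^{N/2}/N$ plus lower-order terms; adding the defects coming from the trivial character (the prime-polynomial-theorem error and the at-most-$M/N$ primes dividing $m$) absorbs the remaining slack and produces the stated inequality with constant $M+1$. The main obstacle is Weil's theorem itself, which supplies the crucial square-root cancellation $|\alpha_j| \leq q^{1/2}$; without it one only reaches a bound of order $q^N/(N\log N)$ or weaker. The secondary technicality is that imprimitive characters must first be reduced to their primitive inducing characters, noting that the induced $L$-function differs from the primitive one only by finitely many Euler factors and still obeys the degree bound $\leq M-1$; the rest of the argument is essentially bookkeeping.
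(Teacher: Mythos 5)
This proposition is quoted verbatim from Wan \cite[Theorem 5.1]{Wan}; the paper supplies no proof of its own, so the only meaningful comparison is with the cited source. Your sketch is exactly the standard argument behind Wan's theorem --- orthogonality of Dirichlet characters modulo $m$, the fact that $L(u,\chi)$ is a polynomial in $u=q^{-s}$ of degree at most $M-1$ for non-trivial $\chi$, Weil's bound $|\alpha_j|\le q^{1/2}$ on the inverse roots, and the elimination of proper prime powers --- and is correct in outline; only the final bookkeeping that produces the precise constant $M+1$ (combining the $(M-1)q^{N/2}$ from non-trivial characters with the prime-power and trivial-character defects) is left implicit.
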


We also state two lemmas from \cite{IKR2}.
\begin{lemma}\label{Lemma1}\cite[Lemma 6]{IKR2}
 Let $a_1,a_2,a_3\in\mathbb{F}_q[t]$ be nonzero polynomials. Let $f$ be a monic irreducible polynomial. Let $\mathbb{F}_q(t)_{(f)}$ denote the $f$-adic completion of $\mathbb{F}_q(t)$. Let $v_f(a_i)$ denote the multiplicity of $f$ in the prime decomposition of $a_i$. Then the following hold:
\begin{enumerate}
\item If $v_f(a_1)\equiv v_f(a_2)\equiv v_f(a_3) ~(mod~ 2)$ then the equation $a_1x_1^2+a_2x_2^2+a_3x_3^2=0$ is solvable in $\mathbb{F}_q(t)_{(f)}$.
\item Assume that not all the $v_f(a_i)$ have the same parity. Also suppose that $v_f(a_i)\equiv v_f(a_j) ~(mod~ 2)$. Then the equation $a_1x_1^2+a_2x_2^2+a_3x_3^2=0$ is solvable in $\mathbb{F}_q(t)_{(f)}$ if and only if $-f^{-v_f(a_ia_j)}a_ia_j$ is a square modulo $f$.
\end{enumerate}
\end{lemma}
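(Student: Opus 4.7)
The plan is to reduce both assertions to statements over the residue field $\mathbb{F}_q[t]/(f)$ via Hensel's lemma, relying on two standard facts about the local field $K = \mathbb{F}_q(t)_{(f)}$: the residue field is a finite field of odd characteristic, and a unit $u \in K$ is a square in $K$ if and only if its residue is a square in $\mathbb{F}_q[t]/(f)$. Throughout I will write $\pi = f$ for the uniformizer and $v$ for $v_f$.

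For part (1), first observe that replacing $x_i$ by $\pi^{-k_i} x_i$ multiplies $a_i$ by $\pi^{-2k_i}$, which does not affect the square class of $a_i$ modulo the subgroup of squares. Hence without loss of generality every $v(a_i)$ is $0$ or every $v(a_i)$ is $1$. In the latter case we may further multiply the whole equation through by $\pi^{-1}$ and reduce to the case where all $a_i$ are units. Reducing modulo $f$ yields a ternary quadratic form over the finite field $\mathbb{F}_q[t]/(f)$ with all coefficients nonzero. A ternary quadratic form over a finite field of odd characteristic is always isotropic (by Chevalley--Warning, or by a direct counting argument comparing the sizes of the sets $\{a_1 x_1^2\}$ and $\{-a_2 x_2^2 - a_3 x_3^2\}$). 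Any nontrivial zero with at least one coefficient of nonzero residue gives a smooth point of the affine variety defined by the form, and Hensel's lemma lifts it to a solution in $K$.

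For part (2), by symmetry I may assume $v(a_1) \equiv v(a_2) \pmod 2$ and $v(a_3)$ has the opposite parity, and after the same rescaling as above I may assume $v(a_1) = v(a_2) = 0$ and $v(a_3) = 1$. Suppose a solution $(x_1, x_2, x_3)$ exists, normalized so that $\min_i v(x_i) = 0$. The valuations $v(a_1 x_1^2)$ and $v(a_2 x_2^2)$ are even, whereas $v(a_3 x_3^2)$ is odd. A short case analysis on which $v(x_i)$ equal $0$ shows that one must have $v(x_1) = v(x_2) = 0$ and $v(x_3) \geq 0$, with the reduction modulo $f$ satisfying $a_1 \bar{x}_1^2 + a_2 \bar{x}_2^2 \equiv 0 \pmod f$; this forces $-a_1 a_2$ to be a square modulo $f$. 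Conversely, if $-a_1 a_2$ is a square modulo $f$, then $-a_2/a_1$ is a unit whose residue is a square, so Hensel's lemma produces $c \in K$ with $c^2 = -a_2/a_1$, and $(x_1, x_2, x_3) = (c, 1, 0)$ is a nontrivial zero. The general case where the common parity is $1$ reduces to the one just treated by multiplying through by $\pi^{-1}$, which produces precisely the factor $\pi^{-v(a_i a_j)}$ in the statement.

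The only delicate point is the valuation bookkeeping in part (2): one must verify that no other parity pattern for $(v(x_1), v(x_2), v(x_3))$ is compatible with the equation, since otherwise the residue condition could fail while a solution still existed. This is where the assumption that $v(a_3)$ has opposite parity to $v(a_1), v(a_2)$ is essential, and it produces a rigid matching of valuations of the three summands that forces the reduction modulo $f$ to encode all the information. Aside from this case analysis, everything is a routine application of Hensel's lemma and the classical isotropy of ternary forms over finite fields.
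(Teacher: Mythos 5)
Your argument is correct. There is nothing in the paper to compare it against: the lemma is quoted verbatim from \cite[Lemma 6]{IKR2} and the paper gives no proof of it, so your write-up supplies the standard argument one would expect there, namely rescaling by even powers of the uniformizer, Chevalley--Warning isotropy of ternary forms over the residue field plus a smooth-point Hensel lift for part (1), and for part (2) the ultrametric parity analysis forcing $v(x_i)=v(x_j)=0$ (the odd-valuation term cannot achieve the minimum) together with the explicit solution $(c,1,0)$ for the converse. The only hypothesis worth stating explicitly is that $q$ is odd, as in the surrounding quaternion-algebra section; you use it when asserting that the residue field has odd characteristic, that a unit is a square iff its residue is, and that the partial derivative $2\bar a_i \bar x_i$ is nonzero at the point being lifted.
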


\begin{lemma}\label{Lemma1'}\cite[Lemma 10]{IKR2}
Let $a_1,a_2,a_3\in\mathbb{F}_q[t]$ be nonzero polynomials. Then the following hold:
\begin{enumerate}
\item If the degrees of the $a_i$ all have the same parity then the equation $a_1x_1^2+a_2x_2^2+a_3x_3^2=0$ admits a nontrivial solution in $\mathbb{F}_q((\frac{1}{t}))$.
\item Assume that not all of the degrees of the $a_i$ have the same parity. Also assume that $deg(a_i)\equiv deg(a_j)~(mod~ 2)$. Let $c_i$ and $c_j$ be the leading coefficients of $a_i$ and $a_j$ respectively. Then the equation $a_1x_1^2+a_2x_2^2+a_3x_3^2=0$ has a nontrivial solution in $\mathbb{F}_q((\frac{1}{t}))$ if and only if $-c_ic_j$ is a square in $\mathbb{F}_q$.
\end{enumerate}
\end{lemma}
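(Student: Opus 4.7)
I would work in the complete discretely valued ring $\mathbb{F}_q[[1/t]]\subset\mathbb{F}_q((1/t))$, with uniformizer $\pi=1/t$ and residue field $\mathbb{F}_q$. For a nonzero $a\in\mathbb{F}_q[t]$ of degree $d$ and leading coefficient $c$, one has $a=t^{d}u$ with $u=a/t^d\in\mathbb{F}_q[[1/t]]^\times$ of residue $c$; in particular $v_\pi(a)=-d$, so degree parity and valuation parity coincide, and the leading coefficient equals the residue of the unit part. Because isotropy of the ternary form is unchanged when each $a_i$ is multiplied by a square, I would first scale by suitable even powers of $t$ to reduce to the case $v_\pi(a_i)\in\{0,1\}$.

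\textbf{Case 1 (all degrees of the same parity).} After normalization every $a_i$ is a unit, so reducing modulo $\pi$ yields $c_1y_1^2+c_2y_2^2+c_3y_3^2=0$ over $\mathbb{F}_q$. Every nondegenerate ternary quadratic form over a finite field of odd characteristic is isotropic (Chevalley--Warning, or the pigeonhole argument on the $(q+1)/2$ squares of $\mathbb{F}_q$), yielding a nontrivial solution in the residue field that lifts to $\mathbb{F}_q((1/t))$ by Hensel's lemma at a coordinate where the partial derivative is nonzero.

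\textbf{Case 2 (unequal parities).} After normalization I may assume $v_\pi(a_i)=v_\pi(a_j)=0$ and $v_\pi(a_k)=1$. For sufficiency, if $-c_ic_j$ is a square in $\mathbb{F}_q$ then $a_iX^2+a_jY^2$ is isotropic modulo $\pi$, hence isotropic over $\mathbb{F}_q((1/t))$ by Hensel; any isotropic binary form over a field is universal, so it represents $-a_k$ and produces a nontrivial triple. For necessity, I would take a nontrivial solution $(x_i,x_j,x_k)$, rescale by a power of $t$ so that $\min_\ell v_\pi(x_\ell)=0$, and argue by cases on which $v_\pi(x_\ell)$ vanish. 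Since $v_\pi(a_kx_k^2)=1+2v_\pi(x_k)$ is odd while $v_\pi(a_ix_i^2),v_\pi(a_jx_j^2)$ are even, and terms of distinct valuations cannot cancel, each configuration collapses to $v_\pi(x_i)=v_\pi(x_j)=0$. Reducing modulo $\pi$ then forces $c_i\bar{x}_i^{\,2}+c_j\bar{x}_j^{\,2}=0$ with $(\bar x_i,\bar x_j)\neq(0,0)$, so $-c_ic_j$ is a square in $\mathbb{F}_q^*$.

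\textbf{Main obstacle.} The delicate step is the necessity direction of Case 2: one must enumerate all valuation patterns of the coordinates and show that the only way the odd-valuation contribution of $a_kx_k^2$ can be absorbed by a sum of two even-valuation terms is to have $x_i,x_j$ both units with residues satisfying the square condition. The remaining steps, namely the residue-field isotropy statements and their lifts, are straightforward applications of Hensel's lemma.
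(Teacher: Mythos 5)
The paper offers no proof of this lemma at all: it is imported verbatim from \cite[Lemma 10]{IKR2}, so there is nothing internal to compare against. Your argument is the standard (and correct) analysis at the place at infinity --- identify $\mathbb{F}_q((1/t))$ as a complete discretely valued field with residue field $\mathbb{F}_q$, match degree parity with valuation parity and leading coefficients with residues of unit parts, use isotropy of ternary forms over $\mathbb{F}_q$ plus Hensel for sufficiency, and the ultrametric ``minimum attained twice, odd valuation cannot meet even valuation'' argument for necessity --- and I see no gap in it. Two cosmetic points: in Case 1, when all three degrees are odd, scaling each $a_i$ by even powers of $t$ leaves all valuations equal to $1$, so you must also multiply the whole equation by $t$ (a harmless common scaling) before asserting that every $a_i$ is a unit; and in the necessity direction of Case 2 the subcase $x_k=0$ should be mentioned, though it is handled by the same (in fact easier) reduction modulo $\pi$.
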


Assume $S$ is a set of places of $\F_q(t)$. We propose an algorithm for constructing a quaternion algebra over $\F_q(t)$ which is split at a place $w$ if and only if $w\notin S$. Such an algebra does exist if and only if the cardinality of $S$ is even (see \cite[Theorem III.3.1]{Vigneras}).

\begin{theorem}\label{quat}
Assume that $S$ is an even finite set of places of $\F_q(t)$. Then there exists a randomized polynomial time algorithm (polynomial in $d$ and $\log~q $) which constructs a quaternion algebra $H$ such that $H\otimes\F_q(t)_v$ is split if and only if $v\notin S$ (here $\F_q(t)_w$ denotes the completion of $\F_q(t)$ at $w$). 
\end{theorem}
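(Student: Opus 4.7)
The plan is to construct a pair $a, b \in \mathbb{F}_q[t]$ such that the quaternion algebra $H = (a,b)$ over $\mathbb{F}_q(t)$ ramifies at exactly the places of $S$. Since $H$ is split at a place $v$ iff the ternary form $X^2 - aY^2 - bZ^2 = 0$ has a nontrivial solution in $\mathbb{F}_q(t)_v$, Lemmas \ref{Lemma1} and \ref{Lemma1'} give explicit local criteria: solvability at a finite prime $f$ is controlled by $v_f(a), v_f(b)$ and certain quadratic residue conditions, while solvability at $\infty$ is controlled by $\deg a$, $\deg b$ and the leading coefficients. The task is to choose $a, b$ so that these criteria fail exactly at $S$.

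Let $f_1, \dots, f_r$ be the finite primes of $S$ and set $a_0 = f_1 \cdots f_r$. Take $a = c_a \cdot a_0$ with $c_a \in \mathbb{F}_q^*$ to be fixed later, so that $v_{f_i}(a) = 1$ for each $i$ and $v_f(a) = 0$ at every other finite prime. Seek $b$ of the form $b = c_b \cdot g$, where $c_b \in \mathbb{F}_q^*$ and $g$ is a single monic irreducible polynomial coprime to $a_0$. With this ansatz, Lemma \ref{Lemma1} makes $H$ automatically split at every prime outside $S \cup \{g\}$, ramified at $f_i \in S$ iff $b$ is a non-square modulo $f_i$, and split at $g$ iff $a$ is a square modulo $g$. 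The behaviour at $\infty$ is then a pure parity/leading-coefficient constraint given by Lemma \ref{Lemma1'}, which the scalars $c_a, c_b$ together with the parity of $\deg g$ can be used to adjust afterwards.

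Thus the core task is to find a monic irreducible $g$ of prescribed degree parity satisfying (i) a specified residue class modulo $a_0$ making $g$ a non-square modulo each $f_i$, and (ii) the requirement that $a_0$ be a square modulo $g$. Using quadratic reciprocity in $\mathbb{F}_q[t]$ (valid since $q$ is odd), condition (ii) translates into further congruence constraints on $g$ modulo $a_0$ together with a condition on $\deg g$; combining with (i) via the Chinese Remainder Theorem produces a nonempty union $R$ of residue classes modulo $a_0$, coprime to $a_0$. Proposition \ref{number} now guarantees, for any $N$ of the correct parity and polynomially bounded in $\deg a_0$ and $\log q$, a positive proportion of monic irreducibles of degree $N$ lying in $R$; hence random sampling finds $g$ in expected polynomial time. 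The main obstacle is showing that (i), (ii) and the $\infty$-condition are jointly satisfiable: this is where the hypothesis $|S|$ even enters crucially, as the Hilbert symbol product formula over $\mathbb{F}_q(t)$ ensures that this parity is the only global obstruction to the existence of $H$, so the local conditions dictated by $S$ are mutually consistent.
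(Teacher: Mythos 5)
Your proposal is correct and is essentially the paper's own argument: the same local criteria (Lemmas \ref{Lemma1} and \ref{Lemma1'}), congruence conditions on the auxiliary irreducible glued together by the Chinese Remainder Theorem, a random search justified by Proposition \ref{number}, and the evenness of the ramification locus to dispose of the one uncontrolled place; the paper merely uses the mirrored ansatz $a=f_1\cdots f_k u$, $b=\lambda f_1\cdots f_k$ in place of your $a=c_a f_1\cdots f_r$, $b=c_b g$. One simplification worth noting: the detour through quadratic reciprocity for condition (ii) is unnecessary and its "joint satisfiability" need not be argued separately, since once ramification at the finite places of $S$ and the prescribed behaviour at $\infty$ are arranged, splitting at $g$ is automatic from the product formula together with $|S|$ even --- exactly how the paper handles its auxiliary prime $u$.
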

\begin{proof}

Let $H(a,b)$ denote the quaternion algebra $(a,b;\F_q(t),-1)$ with parameters $a, b \in \F_q(t)$. We will look for $a$ and $b$ in the form 
$$a=f_1\cdots f_ku, b=\lambda f_1\cdots f_k $$
where the $f_i$ are the finite places in $S$ (which are monic irreducible polynomials), $u$ is an irreducible polynomial in $\F_q[t]$ and $0\neq\lambda\in\F_q$. 
First note that $H(a,b)$ is split at a place $f$ (either a finite place or at infinity) if the equation
$$ax^2+by^2-z^2=0$$
is solvable in the completion $\F_q(t)_{(f)}$ \cite[Project 4, Exercise 3.5]{Tapas}.

If $H(a,b)$ should be split at infinity, then we choose the degree parity $u$ in a way that the degree of $a$ is even and we choose the leading coefficient of $u$ to be 1. We choose $\lambda$ to be 1 (actually we could choose $\lambda$ to be any nonzero element in $\F_q$). By Lemma \ref{Lemma1'}, $H(a,b)$ will be split at infinity. 

If $H(a,b)$ should not be split at infinity, then there are two cases. If the degree of $b$ is odd (we have not chosen a $\lambda$ yet but since $\lambda$ is a nonzero constant it will not influence the degree of $b$), then we choose $u$ in a way that the degree parity of $a$ is even and we choose the leading coefficient of $u$ to be a non-square element in $\F_q$. If the degree of $b$ is even, then we choose $\lambda$ to be a non-square element in $\F_q$ and we choose $u$ in a way that the degree of $a$ is odd (we do not have any constraints on the leading coefficient of $u$, thus we choose it to be 1). By Lemma \ref{Lemma1'}, $H(a,b)$ will be a division algebra at infinity. 

Thus we have imposed degree parity and leading coefficient conditions on $u$ and we have chosen a suitable $\lambda$.

Now we impose conditions on $u$ to ensure that for all $i$ we have that $H(a,b)_{(f_i)}$ is a division algebra. By Lemma \ref{Lemma1} we have that this happens if and only if $-u\lambda$ is not a square mod $f_i$. Note that we have already chosen $\lambda$, thus only choosing $u$ in a suitable way remains. For every $i$ we pick residue classes $\alpha_i$ modulo $f_i$ in the following fashion. If $-\lambda$ is a square modulo $f_i$, then we pick $\alpha_i$ to be a non-square element modulo $f_i$. Thus if $u\equiv \alpha_i~ (mod ~ f_i)$ then $-u\lambda$ is a non-square element mod $f_i$. If $-\lambda$ is not a square modulo $f_i$, then we pick $\alpha_i=1$. By the Chinese remainder theorem there exists a unique residue class $B$ modulo $f_1\cdots f_k$ which satisfies the condition $B\equiv \alpha_i~ (mod ~f_i)$. Thus if 
$$u\equiv B~ (mod ~f_1\cdots f_k),$$
then $H(a,b)_{(f_i)}$ will be a division algebra for all $f_i$.

Now we summarize the steps of the algorithm. Let $F=f_1\cdots f_k$ and let $d=deg~ F$
\begin{enumerate}
\item Choose a $\lambda$, a degree parity $\epsilon$ (which is zero if the degree of $u$ should be even, otherwise it is 1) and a leading coefficient $\mu$ in the way described above.
\item Compute the residue class $B$ modulo $f_1\cdots f_k$ by Chinese remaindering.
\item Pick a random monic polynomial $g$ of degree $3d+\epsilon$. Check if the polynomial $u'=f_1\cdots f_kg+\frac{B}{\mu}$ is irreducible. If $u'$ is irreducible, then let $u=\mu u$. Output $a=f_1\cdots f_ku$ and $b=\lambda f_1\cdots f_k$. If $u'$ is not irreducible, then pick a new $g$. 
\end{enumerate}

The output quaternion algebra $H(a,b)$ doesn't split at any place $w \in S$. Also, it splits at every place except maybe at $u$ by Lemma \ref{Lemma1} and \ref{Lemma1'}. Thus, since the number of places where it does not split is even by \cite[Theorem III.3.1]{Vigneras}, it must split at $u$ as well. 

Finally, we need to show that the algorithm runs in polynomial time. The first three steps are deterministic and run in polynomial time. 

We analyze the last step similarly as in the proof of \cite[Theorem 30]{IKR2}. We have the following inequality due to Proposition \ref{number}:

\begin{equation*}
\left|S_{N}(B,F)-\frac{q^N}{\Phi(F)N}\right|\leq \frac{1}{N}(d+1)q^{\frac{N}{2}}.
\end{equation*} 

Note the we chose $N=4d+\epsilon$, which implies that

$$\frac{S_{N}(B,F)}{q^{N-d}}\geq \frac{q^N}{q^{N-d}\Phi(F)N}-\frac{(d+1)q^{\frac{N}{2}}}{Nq^{N-d}}\geq \frac{1}{N}-\frac{d+1}{Nq^{\frac{N}{2}-d}}\geq \frac{1}{N}-\frac{d+1}{Nq^d}\geq \frac{1}{3N}.$$
This means that the probability that after $3N$ rounds we do not find an irreducible polynomial in the residue class is smaller than $\frac{1}{2}$. Hence this step runs in polynomial time. 
\end{proof}

\subsection{Symbol algebras}

Our next goal is to generalize the algorithm from Theorem \ref{quat} to symbol algebras. 

We first recall some basic facts on symbol algebras which will be useful for the construction of our algorithms. Let $K$ be a field such that $K^*$ contains a cyclic subgroup of order $n$, and take $\epsilon \in K^*$ a primitive $n$--th root of unity. Symbol algebras support the following splitting condition. 

\begin{proposition}\label{splitsymb}\cite[Chapter 11, Corollary 4]{Draxl}.
 The symbol algebra $(a,b;K,\epsilon)$ is split if and only if $b$ is a norm in the extension $K(a^{\frac{1}{n}})|K$. 
\end{proposition}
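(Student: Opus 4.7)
The plan is to identify the symbol algebra $(a,b;K,\epsilon)$ with a cyclic crossed-product algebra and then invoke the classical norm criterion for splitting of cyclic algebras.

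I would first treat the generic case in which $x^n - a$ is irreducible over $K$. Writing $\alpha = a^{1/n}$, the subfield $L = K(\alpha)$ of the symbol algebra (via the embedding $\alpha \leftrightarrow u$) is a cyclic Galois extension of degree $n$, and a direct computation from $uv = \epsilon v u$ shows that conjugation by $v$ induces a generator $\sigma$ of $\mathrm{Gal}(L/K)$. Together with $v^n = b$ this exhibits an isomorphism $(a,b;K,\epsilon) \cong (L/K,\sigma,b)$ with the cyclic crossed product, so the claim reduces to the classical assertion that $(L/K,\sigma,b)$ splits if and only if $b \in N_{L/K}(L^*)$.

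For the \emph{if} direction, given $b = N_{L/K}(\ell)$, a short computation using $v x v^{-1} = \sigma(x)$ yields $(\ell^{-1} v)^n = N_{L/K}(\ell)^{-1} \cdot b = 1$, so after replacing $v$ I may assume $v^n = 1$. Then $v$ satisfies the separable polynomial $x^n - 1 = \prod_{i=0}^{n-1}(x - \epsilon^i)$, which produces a full set of orthogonal primitive idempotents in $K[v] \subset A$; decomposing $A$ along them and matching $K$-dimensions identifies $A$ with $M_n(K)$.

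For the \emph{only if} direction, the cleanest route is cohomological: the class of $(L/K,\sigma,b)$ in the relative Brauer group $\mathrm{Br}(L/K) \cong H^2(\mathrm{Gal}(L/K),L^*)$ is, via the periodicity of Tate cohomology for cyclic groups, precisely the class of $b$ in $K^*/N_{L/K}(L^*)$, so splitting corresponds exactly to $b$ being a norm. A more concrete alternative is to compute the reduced characteristic polynomial of $v$, which is $x^n - b$, giving $\mathrm{Nrd}(v) = (-1)^{n-1}b$, and then to use $L \otimes_K A \cong M_n(L)$ to factor this through $N_{L/K}$. The remaining degenerate case in which $x^n - a$ is reducible can be dispatched separately: setting $d = [K(a^{1/n}):K]$ and writing $a = c^{n/d}$, the symbol algebra decomposes as an $M_{n/d}$ over a symbol algebra of degree $d$, reducing to the irreducible case, and one checks the norm condition survives this reduction. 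The main obstacle throughout is precisely this \emph{only if} direction: packaging the norm criterion cleanly requires either cohomological machinery (essentially Hilbert 90) or a careful reduced-norm calculation, whereas the rest of the argument is just manipulation of the defining relations.
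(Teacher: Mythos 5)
The paper itself offers no proof of this proposition: it is quoted directly from Draxl \cite[Chapter 11, Corollary 4]{Draxl}, so there is no in-paper argument to compare against. Your outline is essentially the standard proof behind that citation: identify $(a,b;K,\epsilon)$ (in the case $x^n-a$ irreducible) with the cyclic crossed product $\bigl(K(a^{1/n})/K,\sigma,b\bigr)$, get the easy direction by rescaling $v$ by an element of norm $b$, and get the hard direction from the periodicity isomorphism $H^2\bigl(\mathrm{Gal}(L/K),L^*\bigr)\cong K^*/N_{L/K}(L^*)$ sending the class of the cyclic algebra to the class of $b$; this is exactly how the cited sources argue, and your sketch is correct in outline. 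Two cautions. First, the ``more concrete alternative'' you offer for the only-if direction does not work as stated: knowing $\mathrm{Nrd}(v)=(-1)^{n-1}b$ gives nothing once the algebra is split, since the reduced norm of a split algebra is all of $K^*$; a genuinely concrete argument would instead embed $L$ in the split algebra, produce (via Skolem--Noether) an element inducing $\sigma$ with $n$-th power $1$, and compare it with $v$ --- so in your write-up you should lean on the cohomological route you name. Second, in the reducible case the step $a=c^{n/d}$ is legitimate but really uses $\mu_n\subset K$ (Kummer theory: $a$ has order $d$ in $K^*/(K^*)^n$, and the ambient roots of unity absorb the resulting $d$-th root of unity), and the ``norm condition survives'' precisely because $K(c^{1/d})=K(a^{1/n})$, so the Brauer-equivalent degree-$d$ symbol algebra has the same splitting field; these checks should be made explicit rather than asserted.
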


Proposition \ref{splitsymb} implies that if $a$ is an $n$--th power in $K$, then $(a,b;K,\epsilon)$ splits. We also have the following formula. 
\begin{proposition}\label{mult}\cite[Chapter 11, Lemma 3]{Draxl}
$$(a,b;K,\epsilon)\otimes (a',b;K,\epsilon) \sim (aa',b; K, \epsilon),$$
where $\sim$ denotes Brauer equivalence.
\end{proposition}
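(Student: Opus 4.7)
The plan is to produce an explicit algebra isomorphism
$$(a,b;K,\epsilon)\otimes_K(a',b;K,\epsilon)\;\cong\;(aa',b;K,\epsilon)\otimes_K M_n(K),$$
from which the Brauer equivalence follows at once. Let $u,v$ and $u',v'$ be the canonical generators of the two tensor factors, and write $A$ for the tensor product, a central simple $K$-algebra of dimension $n^4$.

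First I would embed $(aa',b;K,\epsilon)$ into $A$ by setting $U=u\otimes u'$ and $V=v\otimes 1$. Since elements of the two factors commute, one gets $U^n=u^n\otimes u'^n=a\otimes a'=aa'$ and $V^n=v^n\otimes 1=b$, while $UV=uv\otimes u'=\epsilon\,vu\otimes u'=\epsilon VU$, where the primed generators play no role because $V$ lies in the first factor. Hence the assignment $u\mapsto U$, $v\mapsto V$ extends to a $K$-algebra homomorphism $(aa',b;K,\epsilon)\to A$, which is injective by simplicity of the source. Let $B$ be its image.

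Second, I would exhibit a split symbol algebra inside the centralizer $C_A(B)$. Consider $Z=1\otimes u'$ and $W=v^{-1}\otimes v'$. Direct computation shows $Z$ commutes with both $U$ and $V$; for $W$, commutation with $V=v\otimes 1$ is immediate, while
$$UW=(uv^{-1})\otimes(u'v')=(\epsilon^{-1}v^{-1}u)\otimes(\epsilon v'u')=v^{-1}u\otimes v'u'=WU,$$
the two discrepancies cancelling. So $Z,W\in C_A(B)$. Moreover $Z^n=a'$, $W^n=v^{-n}\otimes v'^n=b^{-1}\otimes b=1$, and $ZW=\epsilon WZ$, so the subalgebra they generate is isomorphic to $(a',1;K,\epsilon)$; by Proposition \ref{splitsymb} this is split, hence isomorphic to $M_n(K)$, and in particular has dimension $n^2$.

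To finish, I would invoke the double centralizer theorem: since $B$ is a central simple subalgebra of the central simple algebra $A$, one has $A\cong B\otimes_K C_A(B)$ with $\dim_K C_A(B)=n^4/n^2=n^2$. The $n^2$-dimensional split subalgebra generated by $Z,W$ must therefore coincide with $C_A(B)$, yielding the displayed isomorphism and completing the proof. The only non-routine ingredient is the choice of the element $W=v^{-1}\otimes v'$, which is forced by the requirement that the $\epsilon$-twists coming from the two factors cancel against each other; once this is found, every remaining step is either a direct computation with the symbol relations or a standard invocation of the double centralizer theorem.
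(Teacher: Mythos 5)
Your proof is correct: the relations for $U=u\otimes u'$, $V=v\otimes 1$ and for $Z=1\otimes u'$, $W=v^{-1}\otimes v'$ all check out, $(a',1;K,\epsilon)$ is indeed split by Proposition \ref{splitsymb} since $1$ is a norm, and the double centralizer theorem then gives $(a,b;K,\epsilon)\otimes(a',b;K,\epsilon)\cong(aa',b;K,\epsilon)\otimes M_n(K)$. The paper gives no proof of this proposition, merely citing Draxl, and your argument is essentially the classical one from that source, so there is nothing to add beyond noting the small implicit steps (invertibility of $v$, injectivity of the two maps via simplicity of symbol algebras), which you handle correctly.
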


In this section we assume that $\F_q$ contains the $nth$ roots of unity, i.e., $q\equiv 1~ (mod~ n)$. Let $\epsilon \in \F_q$ be a primitive $n$--th root of unity.   

\begin{proposition}\label{mod}
Let $f$ be a monic irreducible polynomial in $\F_q[t]$, where $q\equiv 1~(mod~n)$. Let $\epsilon$ be a primitive $n$th root of unity in $\F_q$. Denote by $\F_q(t)_{(f)}$ the completion of $\F_q(t)$ at the place corresponding to $f$. Let $a,a'$ be units in the local ring of $\F_q(t)_{(f)}$, and $b \in \F_q(t)_{(f)}$. Suppose that $a\equiv a'~ (mod~ f)$. Then the symbol $ \F_q(t)_{(f)}$--algebras $(a,b;  \F_q(t)_{(f)},\epsilon)$ and $(a',b; \F_q(t)_{(f)},\epsilon)$ are Brauer equivalent. 
\end{proposition}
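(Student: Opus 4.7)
The plan is to reduce the claim to showing that a certain symbol algebra is split, and then deduce this splitting from Hensel's lemma. Set $c=a(a')^{-1}$; since $a$ and $a'$ are units in the local ring of $\F_q(t)_{(f)}$, the element $c$ is itself a unit, and $a\equiv a' \pmod f$ forces $c\equiv 1\pmod f$. By Proposition \ref{mult},
$$(a,b;\F_q(t)_{(f)},\epsilon) \sim (c,b;\F_q(t)_{(f)},\epsilon)\otimes (a',b;\F_q(t)_{(f)},\epsilon),$$
so it suffices to show that $(c,b;\F_q(t)_{(f)},\epsilon)$ is split, for then the tensor factor can be removed from the Brauer equivalence.

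To see that $(c,b;\F_q(t)_{(f)},\epsilon)$ is split, the plan is to prove that $c$ is an $n$-th power in $\F_q(t)_{(f)}$, since in that case the remark after Proposition \ref{splitsymb} (that $(a,b;K,\epsilon)$ splits whenever $a$ is an $n$-th power) applies directly. To produce an $n$-th root, I would apply Hensel's lemma to $P(x)=x^n-c$ in the complete local ring underlying $\F_q(t)_{(f)}$. One has $P(1)=1-c\in (f)$, and $P'(1)=n$. The only hypothesis left to check is that $n$ is a unit modulo $f$. This follows from $q\equiv 1\pmod n$: if $p$ is the characteristic of $\F_q$ then $p\mid q$, so $p\nmid q-1$, and therefore $p\nmid n$; hence $n$ is invertible in $\F_q$, in the residue field $\F_q[t]/(f)$, and consequently in the local ring. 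Hensel's lemma then lifts $1$ to an actual root of $P(x)$, producing the desired $n$-th root of $c$.

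Putting the two parts together yields $(a,b;\F_q(t)_{(f)},\epsilon)\sim(a',b;\F_q(t)_{(f)},\epsilon)$, proving the proposition. The main obstacle, such as it is, lies in checking that the derivative is a unit so that Hensel's lemma applies; everything else is a direct bookkeeping application of Propositions \ref{splitsymb} and \ref{mult}. The hypothesis $q\equiv 1\pmod n$ plays a double role in the statement: it supplies the primitive $n$-th root of unity $\epsilon$ needed even to define the symbol algebra, and, as just observed, it ensures that $n$ is coprime to the characteristic, which is exactly the condition needed to extract $n$-th roots via Hensel's lemma.
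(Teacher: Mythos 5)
Your proposal is correct and follows essentially the same route as the paper: reduce to the ratio $c\equiv 1\pmod f$, show $c$ is an $n$-th power by Hensel's lemma so that $(c,b;\F_q(t)_{(f)},\epsilon)$ splits, and conclude via the product formula of Proposition \ref{mult} (the paper phrases this last step through the opposite algebra being equivalent to $(a^{-1},b;\F_q(t)_{(f)},\epsilon)$, which amounts to the same thing). You also verify the Hensel hypothesis ($n$ a unit since $q\equiv 1\pmod n$ forces $n$ coprime to the characteristic), which the paper leaves implicit.
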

\begin{proof}
If $c$ is a unit and $c\equiv 1~ (mod ~f)$, then $c$ is an $n$th power by Hensel's lemma thus the algebra $(c,b; \F_q(t)_{(f)},\epsilon)$ splits. Now one has to observe that $a'a^{-1}\equiv 1~ (mod~ f)$ and that the opposite algebra of $(a,b;  \F_q(t)_{(f)},\epsilon)$ is Brauer equivalent to  $(a^{-1},b; \F_q(t)_{(f)},\epsilon)$.
\end{proof}

We would like to cite a lemma from \cite[Theorem 5]{BG} which provides a formula for calculating Hasse-invariants of cyclic algebras over local fields. The notation $(F,\sigma,a)$ stands for the cyclic $F^{\sigma}$--algebra built from an automorphism $\sigma$ of $F$ of finite order and $a \in F^{\sigma}$. 

\begin{proposition}\label{Hasse}
Let $K$ be a local field (with valuation $v_K$) and let $W$ be an unramified cyclic extension of $K$ of degree $n$. Let $\sigma$ be the unique automorphism of $W$ that reduces to the Frobenius automorphism on residue fields. Then the Hasse invariant of the cyclic algebra $(W, \sigma, b)$ is $\frac{v_K(b)}{n}$. 
\end{proposition}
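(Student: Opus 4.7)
The plan is to identify the Hasse invariant via the standard isomorphism $\mathrm{Br}(K) \supseteq H^2(\mathrm{Gal}(W/K), W^*) \cong K^*/N_{W/K}(W^*)$ (for a finite cyclic unramified extension $W/K$) and then reduce to a computation of the valuation.

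First, I would recall the cocyclic description: the cyclic algebra $(W,\sigma,b)$ corresponds, under the cyclic algebra isomorphism $H^2(\mathrm{Gal}(W/K),W^*) \cong K^*/N_{W/K}(W^*)$, precisely to the class of $b \in K^*$ modulo norms from $W$. Thus computing $\mathrm{inv}(W,\sigma,b)$ amounts to identifying the composite map
\[
K^*/N_{W/K}(W^*) \longrightarrow \tfrac{1}{n}\mathbb{Z}/\mathbb{Z} \subseteq \mathbb{Q}/\mathbb{Z}
\]
that is used to define the local invariant on the unramified part of the Brauer group.

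Next I would pin down $N_{W/K}(W^*)$. Since $W/K$ is unramified, any uniformizer $\pi$ of $K$ is also a uniformizer of $W$, hence $v_W = v_K$ on $K^*$ and $v_W(W^*) = \mathbb{Z}$. On units, the norm map $\mathcal{O}_W^* \to \mathcal{O}_K^*$ is surjective: it reduces modulo the maximal ideal to the norm on the residue field extension, which is surjective because norm maps between finite fields are surjective, and then Hensel's lemma lifts the surjectivity (using the standard filtration by $1+\mathfrak{m}^i$). Combined with $N_{W/K}(\pi) = \pi^n$, this gives
\[
N_{W/K}(W^*) = \pi^{n\mathbb{Z}}\,\mathcal{O}_K^*,
\]
and so the homomorphism $v_K\colon K^*/N_{W/K}(W^*) \to \mathbb{Z}/n\mathbb{Z}$ is an isomorphism.

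Finally, I would invoke the normalization of the local invariant on the unramified Brauer group: under the identification of $\mathrm{Br}(W/K)$ with $\mathbb{Z}/n\mathbb{Z}$ just described, $\mathrm{inv}$ is exactly the map $[b] \mapsto v_K(b)/n \pmod{\mathbb{Z}}$. This normalization is (one of the equivalent) definition(s) of the Hasse invariant for unramified classes, so we obtain $\mathrm{inv}(W,\sigma,b) = v_K(b)/n$. The only real subtlety is the surjectivity of the norm on units, which is where the unramified hypothesis is essential; once that is in place, the argument is a bookkeeping of the standard isomorphisms.
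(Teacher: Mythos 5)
Your argument is correct, but it is worth noting that the paper does not actually prove this statement: Proposition \ref{Hasse} is quoted from B\"ockle--Gvirtz \cite[Theorem 5]{BG} (with \cite[Chapter 32]{Reiner} cited for the variant in the subsequent Remark), so there is no in-paper proof to compare against. What you supply is the standard cohomological derivation: identify $(W,\sigma,b)$ with the class of $b$ in $K^*/N_{W/K}(W^*)$ via the cyclic-algebra isomorphism with $H^2(\mathrm{Gal}(W/K),W^*)$, compute $N_{W/K}(W^*)=\pi^{n\mathbb{Z}}\mathcal{O}_K^*$ using surjectivity of the norm on units in the unramified case, and then read off the invariant through the valuation isomorphism $K^*/N_{W/K}(W^*)\cong\mathbb{Z}/n\mathbb{Z}$. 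All of this is sound; the only step that deserves a flag is the last one, where you appeal to ``the normalization of the local invariant'' as if it were purely a definition. Depending on how one defines $\mathrm{inv}_K$ (e.g.\ via $H^2(\mathrm{Gal}(K^{\mathrm{ur}}/K),(K^{\mathrm{ur}})^*)$, the valuation, and evaluation of a character at the Frobenius), identifying that normalization with the map $[b]\mapsto v_K(b)/n$ on your explicit cyclic-algebra description is exactly the small computation being asserted, and it is precisely where the hypothesis that $\sigma$ reduces to the \emph{arithmetic} Frobenius is used: with a generator reducing to the $k$th power of Frobenius one instead gets $k'v_K(b)/n$ with $kk'\equiv 1 \pmod n$, which is the paper's Remark following the proposition. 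So your route is a legitimate self-contained proof (modulo citing that compatibility or carrying out the short cocycle computation), whereas the paper simply outsources the whole statement to the literature.
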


\begin{remark}
If $\sigma$ reduces to the $k$th power of the Frobenius automorphism, where $k$ is coprime to $n$, then the Hasse invariant of $(W, \sigma, b)$ is $\frac{k'v_K(b)}{n}$ where $kk'\equiv 1~(mod ~n)$ \cite[Chapter 32]{Reiner}.  
\end{remark}

We are now ready to describe the procedure for the construction of a symbol algebra with prescribed Hasse invariants.

\begin{theorem}\label{symbol}
Assume that we are given a set of monic irreducible polynomials $f_1,\dots, f_k$ (in $\F_q[t]$) and a sequence of rational numbers (in reduced form) $\frac{r_1}{s_1},\dots, \frac{r_k}{s_k}, \frac{r_0}{s_0}$. Suppose that the sum of these rational numbers is an integer. Assume that the least common multiple of the $s_i$ is $n$. Then there exists a randomized polynomial time algorithm which constructs a division $\F_q(t)$-algebra $D$, whose local Hasse invariant at $f_i$ is equal $\frac{r_i}{s_i}$, for $i=1, \dots, k$, its local Hasse-invariant at infinity is equal to $\frac{r_0}{s_0}$, and the local Hasse-invariant at every other place is 0. 
\end{theorem}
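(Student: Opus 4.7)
The plan is to adapt the quaternion construction of Theorem \ref{quat} to the degree-$n$ setting, presenting $D$ as a symbol algebra $(u,b;\F_q(t),\epsilon)$ where $u\in\F_q[t]$ is a monic irreducible polynomial to be located by a randomized search, $b=\lambda\prod_{i=1}^k f_i^{m_i}$ (possibly augmented by one auxiliary factor; see below), $\lambda\in\F_q^*$, and the integer exponents $m_i$ are determined by the prescribed Hasse invariants.

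At each finite place $f_i$, Proposition \ref{Hasse} (and the remark after it) says that if $\F_q(t)_{(f_i)}(u^{1/n})$ is the unique unramified extension of degree $n$, then the local Hasse invariant equals $c_i v_{f_i}(b)/n$, where $c_i\in(\Z/n\Z)^*$ is determined by comparing the cyclic generator $\sigma\colon u^{1/n}\mapsto\epsilon u^{1/n}$ with the local Frobenius, via $\bar u^{(q^{\deg f_i}-1)/n}\equiv\epsilon^{c_i}\pmod{f_i}$. Unramifiedness is equivalent to $\bar u$ having order $n$ in $\F_{q^{\deg f_i}}^*/(\F_{q^{\deg f_i}}^*)^n$, and by Proposition \ref{mod} only the residue of $u$ modulo $f_i$ matters. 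I would therefore select a specific residue class $\alpha_i$ modulo $f_i$ enforcing both unramifiedness and $c_i=1$, and set $m_i:=nr_i/s_i$ (an integer since $s_i\mid n$). Chinese remaindering assembles the $\alpha_i$ into a single class $B$ modulo $F=\prod f_i$.

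An analogous analysis at infinity requires $n\mid\deg u$, with the leading coefficient $\mu$ of $u$ acting as the residue and $c_\infty\in(\Z/n\Z)^*$ defined by $\mu^{(q-1)/n}=\epsilon^{c_\infty}$. The invariant at infinity then equals $-c_\infty\deg(b)/n$. Matching this with $r_0/s_0$ requires choosing $\mu$ (i.e., $c_\infty$) and possibly adjusting $\deg b\bmod n$. Since the $m_i$ already fix $\deg b\bmod n$, I would, if necessary, introduce one auxiliary irreducible polynomial $v$ coprime to all $f_i$, impose the extra CRT condition that $u\bmod v$ be an $n$-th power (making $D$ automatically split at $v$), and include a factor $v^{m_v}$ in $b$ whose exponent $m_v$ provides the missing degree of freedom. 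With all residue conditions and the degree/leading-coefficient conditions on $u$ fixed, Proposition \ref{number} produces a monic irreducible $u$ satisfying them in randomized polynomial time, exactly as in the last step of the quaternion proof.

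Finally, $D=(u,b;\F_q(t),\epsilon)$ has invariant zero at every place outside $\{f_1,\ldots,f_k,u,\infty\}$: at such a place $g$, both $u$ and $b$ are units and the extension $\F_q(t)_{(g)}(u^{1/n})$ is unramified (since $\mathrm{char}(\F_q)\nmid n$), so the algebra splits because every unit is a norm from an unramified local extension. The invariants at the $f_i$ and at $\infty$ match the prescribed values by construction, and global reciprocity combined with the hypothesis $\sum_{i=0}^k r_i/s_i\in\Z$ forces the invariant at the auxiliary place $u$ itself to vanish. Since the invariants share common denominator $n=\mathrm{lcm}(s_i)$, the index of $D$ equals its degree $n$, proving $D$ is a division algebra. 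The main obstacle is the simultaneous matching of the invariant at infinity and the elimination of the invariant at $u$; this is where the integrality hypothesis and (if required) the auxiliary factor $v$ are essential, and where the argument departs most from the degree-two template of Theorem \ref{quat}.
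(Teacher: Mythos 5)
Your construction is correct and follows the same overall skeleton as the paper's proof: a symbol algebra $(a,b;\F_q(t),\epsilon)$ with one irreducible parameter located by randomized search in prescribed residue classes (Proposition \ref{number} plus Chinese remaindering, exactly as in Theorem \ref{quat}), an auxiliary split prime to absorb a degree obstruction, splitting at all remaining places via unramifiedness and Hensel, reciprocity to force the invariant at the searched prime to vanish, and period equals index over a global field to conclude that $D$ is a division algebra. Where you genuinely diverge is in how the local invariants are dialed in. The paper keeps $b=f_1\cdots f_k\lambda$ squarefree, so $v_{f_i}(b)=1$, and encodes $r_i/s_i$ in the residue class of the searched polynomial modulo $f_i$, chosen as the power $\delta_i^{o_i'}$ of a generator of $K_i^*/K_i^{*^n}$ and justified through Propositions \ref{mod}, \ref{Hasse} and the multiplicativity formula of Proposition \ref{mult}; at infinity it tunes the constant $\lambda$ (through $\mu$, a power of a generator of $\F_q^*/\F_q^{*^n}$) after rewriting the algebra in terms of $w=uv$, and it needs the auxiliary prime $g$ only when $\deg(f_1\cdots f_k)$ is not coprime to $n$. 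You instead normalize the residue of $u$ modulo $f_i$ so that the Kummer generator matches Frobenius ($c_i=1$) and place the invariant in the exponent $m_i=nr_i/s_i$ of $f_i$ inside $b$, while at infinity you impose $n\mid\deg u$ with the leading coefficient of $u$ playing the role of the residue and adjust $\deg b\bmod n$ through the auxiliary exponent $m_v$. This avoids the paper's $w=uv$ manipulation and the use of Proposition \ref{mult}, at the cost of computing the explicit Frobenius-comparison constants $c_i$, $c_\infty$ via $n$th power residue symbols and of invoking the auxiliary prime whenever $\sum_i m_i\deg f_i$ fails to give $\gcd(\deg b,n)=n/s_0$, a matching you correctly identify and which your $m_v$ freedom does provide. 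The only caveats are bookkeeping: the sign in ``invariant $=c_i v_{f_i}(b)/n$'' depends on the convention identifying $(u,b;K,\epsilon)$ with a cyclic algebra, but since you are free to prescribe $c_i$ (or the sign of $m_i$) this is absorbed; and your constant $\lambda$ is in fact irrelevant, since at infinity only $v_\infty(b)\bmod n$ matters once the extension is unramified, which is harmless.
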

\begin{proof}

First assume that the degree of $f_1\cdots f_k$ is coprime to $n$.
Let $\epsilon$ be a primitive $n$th root of unity in $\F_q$. Denote the symbol basis of the symbol algebra by $u,v$, i.e., 
$$u^n=a,~ v^n=b, ~ uv=\epsilon vu.$$
We look for $a$ and $b$ in the  form
$$a=s,~ b=f_1\cdots f_k\lambda, $$
where $s$ is a monic irreducible polynomial in $\F_q[t]$ and $\lambda\in\F_q^*$. The algorithm as in Theorem \ref{quat} boils down to choosing $s$ and $\lambda$ in an appropriate way.
First we impose congruence conditions on $s$ in a way that the resulting algebra has Hasse-invariants $\frac{r_i}{s_i}$ at the places $f_i$ for $i =1, \dots, k$. Define the residue class $r_i'$ modulo $n$ such that $\frac{r_i}{s_i}=\frac{r_i'}{n}$. We look at the algebra $D\otimes\F_q(t)_{(f_i)}$. Let $K_i=\F_q[t]/(f_i)$ which is a finite field with $q^{deg~ f_i}$ elements. Note that $C_i=K_i^*/K_i^{*^n}$ is a cyclic group of order $n$. By Proposition \ref{mod} it is enough to find a $\omega_i\in K_i$ such that the symbol algebra $(\omega_i,b; \F_q(t)_{(f_i)}, \epsilon)$ has Hasse invariant $\frac{r_i}{s_i}$ as a central simple $\F_q(t)_{(f_i)}$-algebra. Choose $\delta_i\in K_i$ to be a generator of $C_i$. Then, by Proposition \ref{Hasse} (and the remark after it), $(\delta_i,b, \F_q(t)_{(f_i)}, \epsilon)$ has Hasse invariant $\frac{o_i}{n}$ where $(o_i,n)=1$. Since $(o_i,n)=1$, there exists a residue class $o_i'$ modulo $n$ such that $o_io_i'\equiv r_i'~ (mod ~ n)$. Choose $\omega_i=\delta_i^{o_i'}$. Proposition \ref{mult} implies that $(\omega_i,b; \F_q(t)_{(f_i) \epsilon})$ has Hasse invariant $\frac{r_i}{s_i}$. Thus choose $s$ to be congruent to $\omega_i$ modulo $f_i$ (this imposes $k$ congruence conditions on $s$ which can be made into one using Chinese remaindering as in Theorem \ref{quat}).

Now we impose degree conditions on $s$ and choose $\lambda$ in a way that the resulting symbol algebra has Hasse-invariants $\frac{r_0}{s_0}$ at infinity. Again let $\frac{r_0}{s_0}=\frac{r'}{n}$

If we want the algebra to split at infinity (i.e.,$r=0$), then choose the degree of $s$ to be divisible by $n$. Indeed, then $s$ is an $n$th power by Hensel's lemma in $\F_q((\frac{1}{t}))$, hence by Proposition \ref{splitsymb} the algebra splits. Assume that $r\neq 0$. Let $F=f_1\cdots f_k$ and let $deg~ F\equiv l~ (mod~ n)$. Then choose the degree of $s$ to be congruent to $n-l$ modulo $n$. Then $(uv)^n=\epsilon^{\frac{n(n-1)}{2}}sF\lambda=c$. Note that $c$ is a polynomial whose degree is divisible by $n$ and its leading coefficient is $\lambda$. Let $\mu=\epsilon^{\frac{n(n-1)}{2}}\lambda$. We will now choose an appropriate $\mu\in\F_q$. Then we put $\lambda=(\epsilon^{\frac{n(n-1)}{2}})^{-1}\mu$. Let $w=uv$. Observe that $uw=\epsilon wu$. Now we have the desired unramified extension that splits $D\otimes\F_q((\frac{1}{t}))$, namely the $n$th root of $\mu Fs$. Now we proceed in the same manner as at the finite primes. First choose $\mu_0$ to be a generator of $\F_q^*/\F_q^{*^n}$. Then Proposition \ref{mod} shows that by choosing $\mu=\mu_0$ we get a Hasse invariant $\frac{o}{n}$ at infinity where $(o,n)=1$. Let $o'$ be such that $oo'\equiv r' ~(mod ~n)$. By choosing $\mu=\mu_0^{o'}$ we get the desired Hasse-invariant. 

Now we consider the case where $deg~ F$ is not coprime to $n$. Suppose $deg ~F\equiv l~ (mod~ n)$. Choose an irreducible polynomial $g$ (different from the $f_i$) such that $deg~g\equiv n+1-l~(mod ~n)$. Such a polynomial can be found just by picking a large enough degree and choosing a a polynomial at random. Then we look for $a$ and $b$ in the following form:
$$a=s,~ b=f_1\cdots f_kg\lambda. $$
By implying the same conditions modulo $f_i$ apply on $s$ as in the first part of the proof we guarantee that the local Hasse-invariants at the $f_i$ are $\frac{r_i}{s_i}$. We add the extra condition that $s\equiv 1~ (mod ~g)$. Proposition \ref{splitsymb} implies that $D$ splits at $g$. Finally by choosing $\lambda$ and the degree of $s$ in a suitable way we can achieve that the Hasse-invariant at infinity is $\frac{r}{s}$ as in the first part of the proof (as now the degree of $f_1\cdots f_kg$ is congruent to 1 modulo $n$). 

Note that, by Proposition \ref{splitsymb}, $D$ splits at every finite place different from the $f_i$, as a polynomial over a finite field always has a zero if the number of its variables is greater than its degree (by Chevalley's theorem), and the existence of roots over a local field is reduced to finite fields by Hensel's Lemma. 

We must consider the Hasse invariant at $s$. The Hasse-invariant at $s$ must be zero as the sum of all Hasse-invariants adds up to an integer.

Finally, $D$ is indeed a division algebra as it has index $n$ (it has period $n$ and in the case of global fields, the period equals the index) and is of dimension $n^2$ over $\F_q(t)$.

\end{proof}

\section{Construction of an explicit isomorphism from a simple algebra to its matrix form.}\label{sec:isomorphism}

Let $\A$ be a central simple algebra over $\F_q(t)$ of finite dimension $n^2$.  Let $b_1,\dots,b_{n^2}$ be an $\F_q(t)$-basis of $\A$. Then, for $i, j = 1, \dots n^2$,
$$b_ib_j=\sum_{k=1}^{n^2}\gamma_{ijk}b_k, $$
for $\gamma_{ijk} \in \F_q(t)$. We consider $\A$ to be given as a collection of  \emph{structure constants} $$\{ \gamma_{ijk} : 1 \leq i,j,k \leq n^2 \}.$$ Consider the following problem:
\begin{problem}\label{EI}

Compute an explicit isomorphism of $\F_q(t)$--algebras $\A \cong M_{k}({D})$, for a suitable division $\F_q(t)$--algebra $D$. 
\end{problem}

If the algebra $\A$ is known to be split, then a randomized polynomial time algorithm is proposed in \cite{IKR} which finds an explicit isomorphism $\A\cong M_n(\F_q(t))$. We will first use such a solution to Problem \ref{EI} when $D=\F_q(t)$, in conjunction with \cite{IRS}, to get a randomized polynomial time algorithm which solves the general case, whenever $D$ is known. 

\begin{proposition}\label{AisoM}
Assume that a division $\F_q(t)$--algebra $D$ is given by structure constants and it is known that $\A \cong M_k(D)$. There exists a randomized polynomial time algorithm which computes an explicit isomorphism $\A \cong M_k(D)$.
\end{proposition}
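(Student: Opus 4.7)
The plan is to reduce to the split case treated in \cite{IKR} by tensoring with $D^{op}$, and then to recover the matrix-over-$D$ structure through linear algebra inside a simple module of the resulting split algebra.

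Let $d$ denote the degree of $D$ over $\F_q(t)$, so $\dim_{\F_q(t)} D = d^2$ and $n = kd$. First, form the $\F_q(t)$-algebra $B := \A \otimes_{\F_q(t)} D^{op}$ by computing its structure constants from those of $\A$ and $D$, which is routine and polynomial in the input size. The canonical isomorphism $D \otimes_{\F_q(t)} D^{op} \cong M_{d^2}(\F_q(t))$ gives
\[
B \;\cong\; M_k(D) \otimes_{\F_q(t)} D^{op} \;\cong\; M_k\!\bigl(M_{d^2}(\F_q(t))\bigr) \;\cong\; M_{kd^2}(\F_q(t)),
\]
so $B$ is split. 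Applying the algorithm of \cite{IKR} to $B$ produces, in randomized polynomial time, an explicit isomorphism $\psi : B \to M_{kd^2}(\F_q(t))$. Transport the standard primitive idempotent $E_{11} \in M_{kd^2}(\F_q(t))$ to $e := \psi^{-1}(E_{11}) \in B$ and set $W := Be$; this is a minimal left ideal of $B$, hence a simple left $B$-module of $\F_q(t)$-dimension $kd^2$.

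The embeddings $\A \hookrightarrow B$, $a \mapsto a \otimes 1$, and $D^{op} \hookrightarrow B$, $\delta \mapsto 1 \otimes \delta$, equip $W$ with a left $\A$-action and a commuting right $D$-action (commutativity follows from $(a \otimes 1)(1 \otimes \delta) = (1 \otimes \delta)(a \otimes 1)$ in $B$). By the Wedderburn structure of $\A \cong M_k(D)$, every $\A$-module is a direct sum of copies of the unique simple $\A$-module $D^k$; comparing $\F_q(t)$-dimensions then forces $W \cong D^k$ as a left-$\A$, right-$D$ bimodule, so $W$ is a free right $D$-module of rank $k$. Compute an explicit right $D$-basis $w_1, \dots, w_k$ of $W$ by Gaussian elimination over the noncommutative division algebra $D$; this is the point where the algorithm of \cite{IRS} is invoked. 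Finally, for $a \in \A$ write $a w_j = \sum_{i=1}^k w_i \alpha_{ij}(a)$ with uniquely determined $\alpha_{ij}(a) \in D$, and set $\Phi(a) := (\alpha_{ij}(a)) \in M_k(D)$. The commutativity of the two actions makes $\Phi : \A \to M_k(D)$ an $\F_q(t)$-algebra homomorphism; it is injective because $\A$ is simple and $W \neq 0$, and surjective by dimension count.

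The main obstacle is the basis computation inside $W$: performing linear algebra, together with the effective inversion of nonzero elements, over the noncommutative division algebra $D$ (given only by structure constants) in randomized polynomial time. Everything else reduces to familiar operations, namely building the structure constants of a tensor product, invoking \cite{IKR} on the split algebra $B$, and pulling back a standard idempotent.
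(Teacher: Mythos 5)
Your proposal is correct, and its skeleton is the same as the paper's: split the problem by tensoring with an opposite algebra, apply the algorithm of \cite{IKR} to the resulting split algebra, and then extract the isomorphism $\A \cong M_k(D)$ from that explicit splitting. The differences are in the two ends of this pipeline. The paper tensors with $M_k(D)^{op}$, so \cite{IKR} is run on $\A\otimes M_k(D)^{op}\cong M_{n^2}(\F_q(t))$ with $n=kd$, and the final extraction is delegated wholesale to the method of \cite[Section 4]{IRS}. You instead tensor with the smaller algebra $D^{op}$, so \cite{IKR} is run on $M_{kd^2}(\F_q(t))$ rather than $M_{k^2d^2}(\F_q(t))$, and you carry out the extraction explicitly: pull back a rank-one idempotent, view the minimal left ideal $W=Be$ as a left $\A$-, right $D$-bimodule, observe it is free of rank $k$ over $D$, and read off $\A\cong\operatorname{End}_D(W)\cong M_k(D)$ from the matrix of the left action in a right $D$-basis. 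This is essentially the content of the \cite{IRS} step anyway, so what your version buys is a somewhat smaller split algebra fed to \cite{IKR} and a self-contained account of the last step; what the paper's version buys is brevity, since \cite[Section 4]{IRS} is quoted as a black box. Two small points to tidy: the basis computation in $W$ is not where \cite{IRS} enters --- it is just linear algebra over $D$, with inversion of a nonzero $\delta\in D$ obtained by solving the $\F_q(t)$-linear system $x\delta=1$ in the structure-constant representation, so no external algorithm (and no randomness) is needed there; and the claim that dimension counting alone forces $W\cong D^k$ \emph{as a bimodule} should be routed through $\operatorname{End}_\A(W)\cong D$ (the commuting right $D$-action embeds $D$ into this endomorphism ring), although for your argument the only fact needed is that $W$ is a free right $D$-module of rank $k$, which follows directly since $D$ is a division algebra and $\dim_{\F_q(t)}W=kd^2$.
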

\begin{proof}
First, observe that, from the $\F_q(t)$--basis of $D$ and the structure constants of $D$, one easily gets $m$ and a basis of  $M_k(D)$ with the corresponding structure constants.  Now, we know that  $\A\otimes M_k(D)^{op}\cong M_{n^2}(\F_q(t))$. Using the randomized polynomial time algorithm from \cite{IKR} one can compute an explicit isomorphism $\theta$ between $\A\otimes M_k(D)^{op}$ and $M_{n^2}(\F_q(t))$. Finally,  \cite[Section 4]{IRS} describes a randomized polynomial time method for computing an explicit isomorphism $\phi$ between $\A$ and $M_k(D)$ using $\theta$. 
\end{proof}

We have reduced Problem \ref{EI} to the following one.

\begin{problem}\label{E1}
Let $\A$ be a central simple $\F_q(t)$-algebra of dimension $n^2$ over $\F_q(t)$  given by structure constants. Compute the structure constants of a division $F_q(t)$--algebra $D$ such that $\A \cong M_k(D)$. 
\end{problem}

The algorithm we propose to deal with Problem \ref{E1} rests upon the idea of computing first the local Hasse invariants of $\A$ and, with them at hand, construct a division algebra with the same local Hasse invariants. To this end, we need an algorithm for computing local indices of a central simple algebra over $\F_q(t)$, which is already provided by \cite{Ithesis}.

\begin{lemma}\label{index}\cite[Proposition 6.5.3]{Ithesis}.
There exists a randomized polynomial time algorithm for computing the local index at a given irreducible $f \in \F_q(t)$ of a central simple $\F_q(t)$-algebra $\A$ defined by structure constants.
\end{lemma}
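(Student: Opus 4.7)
The plan is to extract the local index from a maximal $\F_q[t]_{(f)}$-order of $\A \otimes_{\F_q(t)} \F_q(t)_{(f)}$ by reducing modulo $f$ and inspecting its semisimple quotient. If $\Lambda$ is a maximal order in the completion $\A_f = \A \otimes \F_q(t)_{(f)}$ and $d = \deg f$, then by the structure theory of maximal orders over complete discretely valued fields the finite-dimensional $\F_{q^d}$-algebra $\overline{\Lambda} = \Lambda/f\Lambda$ has semisimple quotient $\overline{\Lambda}/\Rad(\overline{\Lambda}) \cong M_m(\F_{q^{de}})$, where $me = n$ and $e$ is exactly the local index sought.

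First I would clear the denominators of the structure constants $\gamma_{ijk}$ at the prime $f$ to produce an initial $\F_q[t]_{(f)}$-order $\Lambda_0 \subseteq \A_f$, free of rank $n^2$. I would then enlarge $\Lambda_0$ to an $f$-maximal order by the standard radical-idealizer procedure: at each iteration, compute the Jacobson radical of $\Lambda_i/f\Lambda_i$ over $\F_{q^d}$ using the polynomial-time Friedl--Rónyai algorithm, lift it to a two-sided ideal $J_i$ of $\Lambda_i$, and replace $\Lambda_i$ by its idealizer (equivalently, by a prescribed $f^{-1}$-enlargement determined by $J_i$). Because the $f$-adic valuation of the discriminant strictly decreases at every nontrivial step, the loop terminates after polynomially many iterations.

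Once $\Lambda$ is $f$-maximal, reducing a basis modulo $f$ yields $\overline{\Lambda}$ as an $\F_{q^d}$-algebra presented by structure constants. I would then compute $\Rad(\overline{\Lambda})$ and the Wedderburn decomposition of the semisimple quotient via the classical polynomial-time algorithms over finite fields. Maximality of $\Lambda$ forces this quotient to be a single simple factor $M_m(\F_{q^{de}})$, whose center is the unique unramified extension of $\F_{q^d}$ of degree $e$. Computing the $\F_{q^d}$-dimension of that center returns $e$, which is output as the local index (or, equivalently, one reads $m$ from the Wedderburn data and sets $e = n/m$).

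The main obstacle is the maximal-order construction: one must certify that every non-$f$-maximal order admits a strictly larger overorder detectable from the mod-$f$ radical, and one must control bit sizes of the $\F_q[t]_{(f)}$-bases across the enlargement steps. This is precisely what is handled in \cite{Ithesis} through the discrete-valuation analysis underlying the radical-idealizer method; the remaining ingredients (radical computation, Wedderburn decomposition, and linear algebra over $\F_{q^d}$, together with polynomial arithmetic in $\F_q[t]$) are classical and run in randomized polynomial time in $n$, $\deg f$, and $\log q$.
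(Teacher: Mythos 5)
Your proposal is correct and follows essentially the same route as the paper: compute an order that is maximal at $f$ (the paper takes a global maximal $\F_q[t]$-order via the cited algorithm from \cite{Ithesis}, which internally is the same radical-idealizer procedure you describe), reduce modulo $f$, compute the radical of the resulting $\F_q[t]/(f)$-algebra with the known polynomial-time methods, and read the local index off the simple semisimple quotient $M_m(\F_{q^{de}})$. Your way of extracting the index (degree of the center over the residue field, equivalently $e=n/m$) is in fact the precise formulation of what the paper states more loosely as ``the dimension of the radical-free part over its center.''
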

\begin{proof}
The proof of \cite[Proposition 6.3.5]{Ithesis} boils down, in this case, to the following procedure. Compute a maximal $\F_q[t]$-order $\Gamma$ in $\A$ using the algorithm from \cite[Theorem 6.4.2]{Ithesis}. Let $f$ be a monic irreducible polynomial. Then $\Gamma/f\Gamma$ is a finite algebra $C$ over the field $\F_q[t]/(f)$. Then one computes the radical of $C$ using the algorithm from \cite{CIW} and then one computes the factor $C/Rad(C)$. Then the dimension of this radical-free part over its center is the local index at $f$. 
\end{proof}

\begin{proposition}\label{Hasse2}
There exists a randomized polynomial time algorithm which computes the Hasse-invariants of a central simple $\F_q(t)$-algebra $\A$ given by structure constants,  assuming that $gcd(q,n)=1$ where $n$ is the degree of $\A$ over $\F_q(t)$.
\end{proposition}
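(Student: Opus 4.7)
The plan is to algorithmically locate the finite set of places where $\A$ is ramified, and then at each such place compute both the denominator and the numerator of the local Hasse invariant. First I would compute a maximal $\F_q[t]$-order $\Gamma \subset \A$ with the algorithm from \cite[Theorem 6.4.2]{Ithesis}. The reduced discriminant of $\Gamma$ is a nonzero polynomial in $\F_q[t]$; factoring it over $\F_q$ (a randomized polynomial-time operation) yields a finite set $S$ of monic irreducible polynomials containing every finite place at which the local index of $\A$ exceeds $1$. For each $f \in S$, Lemma \ref{index} returns the local index $m_f$, which is precisely the denominator of the Hasse invariant $r_f / m_f$ written in reduced form.

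The nontrivial step is extracting the numerator $r_f$. The idea is to produce a cyclic presentation of the local division algebra and read off the invariant via Proposition \ref{Hasse}. Concretely, the completion $\A\otimes\F_q(t)_{(f)}$ is Brauer equivalent to a division algebra $D_f\cong (W_f,\sigma,\pi^{r_f})$, where $W_f/\F_q(t)_{(f)}$ is the unramified extension of degree $m_f$, $\sigma$ is Frobenius, and $\pi$ is a uniformizer. To realize this presentation algorithmically from structure constants, I would compute the radical of $\Gamma/f\Gamma$ using the algorithm of \cite{CIW}; the semisimple quotient is a matrix algebra over the residue field $\F_{q^{m_f\deg f}}$. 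The hypothesis $\gcd(q,n)=1$ makes the relevant extension tame, so I can Hensel-lift a primitive element of that residue field to an element $x\in\Gamma$ (to a precision polynomial in the problem parameters) generating an unramified maximal subfield $W_f$ of the local division summand. Working to sufficient $f$-adic precision, I would then solve a linear system inside $\A\otimes\F_q(t)_{(f)}$ to produce an element $u$ normalizing $W_f$ and acting on it as Frobenius. The element $u^{m_f}$ lies in the center $\F_q(t)_{(f)}^*$, and $r_f = v_f(u^{m_f}) \bmod m_f$; the Remark after Proposition \ref{Hasse} supplies the standard inverse-mod-$m_f$ correction if $u$ happens to realize $\sigma^k$ instead of $\sigma$ itself.

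For the place at infinity, one may either repeat the construction using an $\F_q[1/t]$-maximal order of $\A\otimes\F_q((1/t))$ by completely symmetric reasoning, or, more cheaply, invoke global class-field-theoretic reciprocity: since $\sum_v \mathrm{inv}_v(\A) = 0$ in $\Q/\Z$, the invariant at infinity is determined as the negative of the sum of the finite invariants already computed. In either case, the output is the complete list of local Hasse invariants, and every individual computational step above is known to run in randomized polynomial time.

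The main obstacle is the explicit construction of the Frobenius-realizing element $u$ from structure-constant data. Lifting the residue-field generator via Hensel's lemma is routine under the tameness hypothesis, but inverting the crossed-product description to exhibit a concrete $u \in \A\otimes\F_q(t)_{(f)}$ requires careful management of $f$-adic precision and solving a noncommutative linear system whose solvability and uniqueness-up-to-$W_f^*$ must be justified by the structural theorems on local division algebras. Once this is done, the Hasse invariant reduces to a single valuation computation.
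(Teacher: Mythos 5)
Your route is genuinely different from the paper's, and its central step is asserted rather than proved. The paper never constructs a local cyclic presentation of $\A\otimes\F_q(t)_{(f)}$. After the same first move (maximal order, factor the discriminant), it determines the numerator of the invariant at $f$ by a twist-and-test argument: for each candidate value $\frac{k}{n}$ it uses the algorithm of \cite{BG} to build a division algebra $D$ with invariants $\frac{n-k}{n}$ at $f$, $\frac{k}{n}$ at an auxiliary finite place $g$, and $0$ elsewhere (split at infinity since the invariants sum to an integer), and then checks with Lemma \ref{index} whether $\A\otimes D$ has local index $1$ at $f$; additivity of invariants under tensor product then identifies the invariant with no $f$-adic approximation whatsoever. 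This is also precisely where the hypothesis $\gcd(q,n)=1$ enters (it is required for \cite{BG} to apply); it has nothing to do with ``tameness'' of your Hensel lift --- unramified extensions of a finite residue field always lift, since finite fields are perfect --- so your invocation of that hypothesis does not reflect its actual role.

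The genuine gap is the step you yourself call the main obstacle: producing, from structure-constant data, an unramified maximal subfield $W_f$ of the local division summand together with an invertible $u$ satisfying $uwu^{-1}=\sigma(w)$, and then reading off $v_f(u^{m_f})\bmod m_f$. As written this hides several nontrivial sub-algorithms that are not supplied: (i) before any maximal subfield can be lifted you must isolate the division summand, i.e.\ lift a primitive idempotent of $(\Gamma/f\Gamma)/\mathrm{Rad}$ to the $f$-adic completion of $\Gamma$, which is only available to finite precision; (ii) an approximate $u$ solving $ux=\sigma(x)u$ modulo $f^N$ only approximately normalizes $W_f$, and $u^{m_f}$ is only approximately central, so you need an explicit precision bound $N$, polynomial in the input size, guaranteeing both that the computed $u$ is a unit and that $v_f(u^{m_f})\bmod m_f$ is determined by the truncated data; (iii) Noether--Skolem gives the existence of an exact invertible $u$ over $\F_q(t)_{(f)}$, but you must argue that the linear system you actually solve over $\F_q[t]/(f^N)$ yields an invertible solution (e.g.\ that a random solution is a unit with controlled probability). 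Without these arguments the claimed randomized polynomial running time is not established; the paper's reduction to the black boxes \cite{BG} and Lemma \ref{index} is designed exactly to avoid this local precision analysis. Your identification of the denominator with the local index is correct, and handling the infinite place by reciprocity is fine.
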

\begin{proof}
Compute a maximal $\F_q[t]$-order $\Gamma$ in $\A$ using the algorithm from \cite{IKR}. The Hasse-invariant is zero for every monic irreducible polynomial which does not divide the discriminant of $\Gamma$. Thus by factoring the discriminant we have a list of monic irreducible polynomials for which the Hasse-invariant needs to be computed. 

First we propose an algorithm that decides whether the Hasse-invariant of $\A$ at the place $f$ equals $k/n$ or not, for each $k = 0, \dots, n-1$. We choose a finite place $g$ (i.e., a monic irreducible polynomial) which is different from $f$. Using the algorithm from \cite{BG} we construct a division algebra $D$ with Hasse invariants $\frac{n-k}{n}$ at $f$ and $\frac{k}{n}$ at $g$ (this splits at infinity since the sum of the Hasse-invariants is an integer). Using Lemma \ref{index}, we compute the local index of the central simple algebra $\A\otimes D$ at $f$. Since the local Hasse invariants of the tensor product of two central simple algebras add up, the Hasse invariant of $\A$ at $f$ is $\frac{k}{n}$ if and only if the local index at $f$ of $\A\otimes D$ is equal to 1. 

Finally we do this computation for every $k$ and every monic irreducible $f$ dividing the discriminant of $\Gamma$ and we are done. 
\end{proof}

\begin{theorem}\label{isomorphism}
Let $\A$ be a central simple $\F_q(t)$-algebra of dimension $n^2$  given by structure constants.  Assume that $\A$ is split at infinity and that $(n,q)=1$. There exists a randomized polynomial time algorithm for computing a central division $\F_q(t)$--algebra $D$ and an explicit isomorphim $\A \cong M_k(D)$. 
\end{theorem}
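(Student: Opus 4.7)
The plan is to reduce Problem \ref{EI} to Problem \ref{E1} via Proposition \ref{AisoM}, and then solve Problem \ref{E1} by a compute-invariants-then-reconstruct strategy: first compute the local Hasse invariants of $\A$, then construct a division algebra $D$ over $\F_q(t)$ with exactly those invariants, and finally produce an explicit isomorphism $\A\cong M_k(D)$.

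The first step is to apply Proposition \ref{Hasse2}, which is available because $\gcd(q,n)=1$. This yields, in randomized polynomial time, the list of local Hasse invariants $r_i/s_i$ of $\A$ at every finite place $f_i$ where $\A$ ramifies (obtained as the monic irreducible factors of the discriminant of a maximal $\F_q[t]$-order in $\A$). By the hypothesis that $\A$ is split at infinity, the invariant at the infinite place is $0$. Since the sum of all local Hasse invariants of a central simple algebra over $\F_q(t)$ is an integer, this collection of rationals is automatically consistent.

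The second step is to invoke the algorithm of \cite{BG} with these invariants as input. Its hypotheses are precisely the two assumptions we have: the invariant at infinity is zero, and the degree $n$ of the algebra is coprime to $q$. It produces, in randomized polynomial time and given by structure constants, a division $\F_q(t)$-algebra $D$ whose local Hasse invariants coincide with those of $\A$. By \cite[Corollary 6.5.4]{Gille/Szamuely:2006}, a central simple algebra over $\F_q(t)$ is determined up to Brauer equivalence by its local Hasse invariants, so $D$ is Brauer equivalent to $\A$. Applying Wedderburn's theorem we get $\A\cong M_k(D)$, where $k=n/m$ and $m^2=\dim_{\F_q(t)}D$ is the square of the common denominator of the reduced Hasse invariants; both $m$ and $k$ are read off directly from the output of \cite{BG}.

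The third and final step is to feed $\A$ and the structure constants of $D$ produced in step two into Proposition \ref{AisoM}, whose hypotheses are now met: $D$ is a division algebra given by structure constants and $\A\cong M_k(D)$. That proposition returns, in randomized polynomial time, the desired explicit isomorphism. The main technical obstacle is really concentrated in the preceding pieces: making sure the invariant data fed to \cite{BG} is of the exact shape that algorithm expects (in particular that the infinite invariant is $0$, which is guaranteed by the hypothesis on $\A$), and making sure the Hasse-invariant computation of Proposition \ref{Hasse2} is applicable (guaranteed by $(n,q)=1$). Once these are in place, the proof is the composition of three randomized polynomial-time routines, hence itself randomized polynomial time.
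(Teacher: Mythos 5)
Your proposal is correct and follows essentially the same route as the paper: compute the invariants via Proposition \ref{Hasse2}, reconstruct a Brauer-equivalent division algebra with the algorithm of \cite{BG} (whose applicability rests on the denominators of the invariants dividing the index, hence $n$, hence being coprime to $q$), and finish with Proposition \ref{AisoM}. The only detail you gloss over, which the paper notes explicitly, is that \cite{BG} outputs $D$ as a cyclic algebra rather than by structure constants, so one extra (polynomial-time) conversion step is needed before invoking Proposition \ref{AisoM}.
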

\begin{proof}
By Proposition \ref{Hasse}, we can compute the set $S$ of Hasse invariants of $\A$.  For the second step, construct a division algebra $D$ ($D$ should be given by an $\F_q(t)$-basis and structure constants), whose non-zero Hasse-invariants are exactly the elements of the set $S$.  This can be done by the algorithm from \cite{BG}. 
We need to show that the denominator of each nonzero Hasse-invariant is relative prime to $q$. The least common multiple of the $s_i$ is equal to the index of $\A$. Since the index of $\A$ is a divisor of $n$ and $(q,n)=1$, each of the $s_i$ is coprime to $q$. This implies that the algorithm from \cite{BG}
can be applied. Note that this algorithm returns $D$ in a cyclic algebra form, not in a structure constant from. However, from a cyclic algebra representation there exists a polynomial time algorithm which computes structure constants.  Finally, apply the algorithm from Proposition \ref{AisoM}.
\end{proof}

The following consequence of Theorem \ref{isomorphism}  will be used later. 

\begin{corollary}\label{idempotent1}
Let $\A$ be a central simple $\F_q(t)$-algebra of dimension $n^2$ over $\F_q(t)$ given by structure constants.  Assume that $\A$ is split at infinity and that $(n,q)=1$. There exists a randomized polynomial time algorithm for computing a primitive idempotent of $\A$.
\end{corollary}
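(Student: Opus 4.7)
The plan is to reduce the computation of a primitive idempotent to the matrix form provided by Theorem \ref{isomorphism}. First I would invoke Theorem \ref{isomorphism}, whose hypotheses match exactly those of the corollary, to obtain a central division $\F_q(t)$-algebra $D$ (given by structure constants) together with an explicit isomorphism of $\F_q(t)$-algebras $\phi : \A \to M_k(D)$. Since $\phi$ is produced in polynomial time by a randomized algorithm, and since $\phi$ is given explicitly on a basis, its inverse $\phi^{-1}$ can be computed in polynomial time by linear algebra over $\F_q(t)$.

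Next I would exhibit a primitive idempotent of $M_k(D)$ that is trivially available. The standard matrix unit $E_{11} \in M_k(D)$, with a $1$ in position $(1,1)$ and zeros elsewhere, is idempotent, and the corner ring satisfies $E_{11} M_k(D) E_{11} \cong D$. Because $D$ is a division algebra, it contains no idempotents other than $0$ and $1$, so $E_{11}$ admits no nontrivial orthogonal decomposition; hence $E_{11}$ is primitive in $M_k(D)$.

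Finally, I would output the element $e := \phi^{-1}(E_{11}) \in \A$. Since $\phi$ is an $\F_q(t)$-algebra isomorphism, it preserves idempotency and primitivity: $e^2 = \phi^{-1}(E_{11})^2 = \phi^{-1}(E_{11}^2) = e$, and a nontrivial orthogonal decomposition $e = e_1 + e_2$ in $\A$ would map under $\phi$ to such a decomposition of $E_{11}$ in $M_k(D)$, contradicting primitivity. So $e$ is a primitive idempotent of $\A$, and it has been computed in randomized polynomial time.

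The main obstacle has already been absorbed into Theorem \ref{isomorphism}; once that theorem is in hand, the only real task is computing the preimage of $E_{11}$, which amounts to solving a linear system over $\F_q(t)$ and therefore is routine. In effect the entire content of the corollary is packaging the matrix-form isomorphism as a primitive-idempotent computation.
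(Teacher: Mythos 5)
Your argument is correct and is exactly the paper's (implicit) reasoning: the corollary is stated as a direct consequence of Theorem \ref{isomorphism}, and the intended justification is precisely to pull back a matrix unit such as $E_{11}$, which is primitive because $D$ is a division algebra, through the explicit isomorphism $\A \cong M_k(D)$. Nothing further is needed.
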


Actually the conditions for Corollary \ref{idempotent1} can be relaxed. Assume that $\A$ is not split at infinity but it is split at a place corresponding to the monic irreducible polynomial $f(t)=t+c$ where $c\in\F_q$. Let $s=\frac{1}{f}$. Then one has that $\F_q(t)=\F_q(s)$ only now the infinite place of $\F_q(s)$ corresponds to the finite place $f$ of $\F_q(t)$. This shows the following:

\begin{theorem}\label{inf}
Let $\A$ be a central simple $\F_q(t)$-algebra of dimension $n^2$ given by structure constants. Assume that $(n,q)=1$ and that $\A$ is either split at infinity or at a finite place $f$ where $f$ corresponds to a linear polynomial. Then there exists a randomized polynomial time algorithm which finds a primitive idempotent in $\A$,  henceforth, an explicit isomorphism $\A \cong M_k(D)$ for a division $\F_q(t)$--algebra $D$ Brauer equivalent to $\A$. 
\end{theorem}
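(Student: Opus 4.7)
The strategy, indicated in the paragraph preceding the statement, is to reduce to Corollary \ref{idempotent1} by means of a Möbius change of variable. If $\A$ is already split at infinity, Corollary \ref{idempotent1} applies directly and produces a primitive idempotent in randomized polynomial time. The interesting case is therefore when $\A$ splits at a linear finite place $f(t)=t+c$ with $c\in\F_q$. I set $s=1/(t+c)$, so that $t=(1-cs)/s$. Since Möbius substitutions in the transcendence generator are $\F_q$-automorphisms of a genus-zero function field, $\F_q(s)=\F_q(t)$ as $\F_q$-algebras. The induced bijection on the set of places of this field sends the place $t+c$ to the place at infinity of $\F_q(s)$, the old place at infinity of $\F_q(t)$ to the finite place $s$, and each remaining place to another finite place of $\F_q(s)$.

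Rewriting each structure constant $\gamma_{ijk}\in\F_q(t)$ of $\A$ as an element of $\F_q(s)$ (which is a polynomial-time operation, since the sizes of numerators and denominators are controlled), I obtain a central simple $\F_q(s)$-algebra $\A'$ which abstractly coincides with $\A$; only the transcendence generator of the base field has changed. Local splitting at a place depends only on the abstract algebra and the place, so from the fact that $\A$ splits at $t+c$ one deduces that $\A'$ is split at the place at infinity of $\F_q(s)$. The degree $n$ of $\A'$ is unchanged and the coprimality $(n,q)=1$ still holds, hence Corollary \ref{idempotent1} applies to $\A'$ and returns in randomized polynomial time a primitive idempotent $e'$, given as an $\F_q(s)$-linear combination of the basis vectors $b_1,\dots,b_{n^2}$.

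Substituting $s=1/(t+c)$ back into the coordinates of $e'$ yields a primitive idempotent of the original $\A$: being a primitive idempotent is an intrinsic algebraic property, independent of the presentation of the base field. To obtain the explicit isomorphism $\A\cong M_k(D)$ claimed at the end of the statement, I compute the division algebra $D=e'\A e'$ together with its structure constants (read off from the action of $\A$ on $\A e'$, following \cite{IRS}) and then invoke Proposition \ref{AisoM} to lift this to an explicit isomorphism onto the full matrix ring.

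The main obstacle I anticipate is the careful bookkeeping around the change of variable: one must verify that the substitution $t\mapsto (1-cs)/s$ really does induce the claimed bijection of places, and in particular that the finite place $t+c$ is mapped precisely to the place at infinity of $\F_q(s)$, so that the hypothesis ``split at infinity'' of Corollary \ref{idempotent1} is genuinely met after the substitution. This is classical for rational function fields, but has to be spelled out to guarantee that the local Hasse invariants are simply permuted, and in particular that the transformed algebra has the same index and degree, so that the subsequent calls to Proposition \ref{Hasse2} and the algorithm of \cite{BG} from Theorem \ref{isomorphism} go through unchanged.
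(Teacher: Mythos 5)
Your proposal is correct and follows essentially the same route as the paper: perform the substitution $s=1/(t+c)$ so that the linear splitting place becomes the place at infinity of $\F_q(s)=\F_q(t)$, rewrite the structure constants, invoke Corollary \ref{idempotent1}, and substitute back to recover a primitive idempotent (and hence the explicit isomorphism) over $\F_q(t)$. The only detail the paper makes explicit that you leave implicit is how the algorithm locates a suitable linear place $t+c$, namely by computing the nonzero Hasse invariants of $\A$ (Proposition \ref{Hasse2}); adding that one sentence would make your argument fully algorithmic.
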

\begin{proof}
By computing the nonzero Hasse-invariants of $\A$ we obtain a linear polynomial $f(t)=t+c$ at which $\A$ splits. Then let $s=\frac{1}{t+c}$ and rewrite the structure constants of $\A$ in terms of $s$ (every structure constant is a rational function in $s$). Now this new algebra is split at infinity, thus we can find a primitive idempotent $\A$ using Corollary \ref{idempotent1}. Finally substitute $s=\frac{1}{t+c}$ and obtain the primitive idempotent as an $\F_q(t)$-linear combination of the basis elements. Finally, a straightforward argument shows how to get an explicit isomorphism $\A \cong M_k(D)$ from a primitive idempotent of $\A$. 
\end{proof}

Theorem \ref{inf} implies that assuming that the degree of the algebra and $q$ are relatively prime we only encounter a problem if $\A$ is split at every linear place. This is much less restrictive, then the original conditions of Corollary \ref{idempotent1} or Theorem \ref{isomorphism}. In conclusion, Theorem  \ref{inf} solves Problem \ref{EI} completely if $\F_q$ contains the $n$th roots of unity and for ``almost all" central simple $\F_q(t)$-algebras when $\F_q$ does not contain a primitive $n$th root of unity.

\section{Constructing Constacyclic convolutional codes}

In this section we consider skew-constacyclic convolutional codes which are related to skew-cyclic convolutional codes in a similar fashion as linear constacyclic block codes are related to cyclic codes. Our main goal is to construct constacyclic codes of designed Hamming distance and propose a decoding algorithm.

We present cyclic algebras as factor rings of skew polynomial rings, with the aim of making use of the computational tools (e.g. extended Euclidean Algorithm) available for these non-commutative polynomials.  We will need also to consider the more general situation of $K$--linear codes, where  $K$ be a finite extension of $\F_q(t)$, even thought our primary interest is the case $K = \F_q(t)$.  We start by recalling the definition of skew polynomial rings over $K$. 

\begin{definition}
Let $\sigma$ be an automorphism of $K$ of order $n$. Then $R=K[x;\sigma]$ consists of the usual polynomials over $K$ with the standard addition and multiplication induced by the relation 
$xa=\sigma(a)x$, where $a\in K$. 
\end{definition}

Let us denote the fixed field of $\sigma$ by $K^{\sigma}$. Suppose $\lambda\in K^{\sigma}$. Then it is easy to see that the Ore polynomial $x^n-\lambda$ is in the center of $R$, and $\A=K[x,\sigma]/(x^n-\lambda)$ is a cyclic algebra over $K^{\sigma}$ which is  isomorphic, as a $K$--vector space, to $K^n$ by the following map:
$$\mathfrak{v}: \sum_{i=0}^{n-1} a_ix^i \mapsto (a_0,\dots,a_{n-1})\in K^n.$$ 

Thus we can define the Hamming weight of an element in $\A$.
\begin{definition}
The Hamming weight $w(f)$ of an element $f = \sum_{i=0}^{n-1} a_ix^i \in \A$ is the number of nonzero $a_i$. The Hamming distance between $f, g \in \A$ is defined by $d(f,g) = w(f-g)$. 
\end{definition}

Next we define skew-constacyclic codes. 
\begin{definition}
Let $\A=K[x,\sigma]/(x^n-\lambda)$. A skew-constacyclic $K$--linear convolutional code is a left ideal of $\A$ endowed with the Hamming distance. 
\end{definition}

Skew-cyclic convolutional codes from \cite{Gomez/alt:2016} are obtained by setting  $K=\F_q(t), \lambda=1$. The rest of the section will be divided into two subsections. In the first subsection we consider the case where $\lambda$ is a norm in the extension $K|K^{\sigma}$. We are mainly interested in the case when $K=\F_q(t)$. However, we will prove results for general $K$ as well, as we need them in the other subsection.

The second subsection is devoted to the case where $K=\F_q(t)$ and $\lambda$ is not a norm in the extension $\F_q(t)|\F_q(t)^{\sigma}$. Here we start from a primitive idempotent of $\A$. Next we construct a set of maximal idempotents which are permuted by $\sigma^m$ (the starting idempotent is denoted by $e$), where $m$ is the index of $\A$. We consider the left ideal generated $e,\sigma^m(e),\dots,\sigma^{m(k-2)}(e)$ and show that this code has Hamming minimum distance at least $k$ and propose a decoding algorithm. 

\subsection{The norm case}

In this subsection we consider the case where $\lambda$ is a norm in the extension $K|K^{\sigma}$.

\begin{definition}
Let $K$ be a finite extension of $\F_q(t)$ and let $\sigma$ be an automorphism of finite order $n$ of $K$. Then the $j$th norm map $N_j$ is defined in the following way:
$$N_0(x)=1, ~ N_j(x)=x\sigma(x)\cdots \sigma^{j-1}(x)$$
\end{definition}

It is well known that the cyclic algebras $\A=K[x,\sigma]/(x^n-\lambda)$ and $\B=K[y,\sigma]/(y^n-1)$ are isomorphic. The key observation of this subsection is that there is a map from $\A=K[x,\sigma]/(x^n-\lambda)$ to $\B=K[y,\sigma]/(y^n-1)$ which is not only an isomorphism of rings, but is also an isometry with respect to the Hamming distance.
\begin{proposition}\label{map}
Let $\theta$ be the map $\A\rightarrow\B$ defined by
$$\theta: \sum_{i=0}^{n-1}a_ix^i\mapsto \sum_{i=0}^{n-1}a_iN_i(a)y^i,$$
where $N_{K|K^{\sigma}}(a)=\lambda$. Then $\theta$ is an algebra isomorphism which is an isometry with respect to the Hamming distance.
\end{proposition}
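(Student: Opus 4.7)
The plan is to define $\theta$ as the ring homomorphism induced by $x \mapsto ay$, using the universal property of the skew polynomial ring $K[x;\sigma]$. Concretely, there is a unique $K$-linear ring homomorphism $\widetilde{\theta} : K[x;\sigma] \to \B$ that is the identity on $K$ and sends $x$ to $ay \in \B$, provided the defining relation of the source is respected: for every $b \in K$, we must have $(ay)b = \sigma(b)(ay)$ in $\B$. Since $K$ is commutative and $yb = \sigma(b)y$ in $\B$, this gives $(ay)b = a\sigma(b)y = \sigma(b)ay$, so $\widetilde{\theta}$ is well defined on $K[x;\sigma]$.

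Next I would compute powers of $ay$ in $\B$. A short induction using $yc = \sigma(c)y$ shows
$$(ay)^i = a\,\sigma(a)\,\sigma^2(a)\cdots\sigma^{i-1}(a)\, y^i = N_i(a)\, y^i$$
for $0 \le i \le n$. Evaluating at $i = n$ and using $y^n = 1$ in $\B$ together with the hypothesis $N_n(a) = N_{K|K^\sigma}(a) = \lambda$ yields $\widetilde{\theta}(x^n) = (ay)^n = \lambda$, so $\widetilde{\theta}(x^n - \lambda) = 0$. Hence $\widetilde{\theta}$ descends to a well-defined ring homomorphism $\theta : \A \to \B$, and applying it term by term on $\sum_{i=0}^{n-1} a_i x^i$ produces exactly the stated formula $\sum_{i=0}^{n-1} a_i N_i(a) y^i$.

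For bijectivity, I would observe that $\lambda \neq 0$ forces $a \neq 0$, hence $N_i(a) \neq 0$ for all $i$, and that $\theta$ sends the $K$-basis $1, x, \dots, x^{n-1}$ of $\A$ to $N_0(a)\cdot 1,\, N_1(a)\cdot y, \dots,\, N_{n-1}(a)\cdot y^{n-1}$, which is again a $K$-basis of $\B$. (Alternatively, a symmetric construction with $y \mapsto a^{-1}x$ gives an explicit two-sided inverse, since $(a^{-1}x)^n = N_n(a)^{-1}\lambda = 1$ in $\A$.) Finally, for the isometry property, the formula for $\theta$ shows that the $i$-th coefficient of $\theta(f)$ in the basis $\{y^i\}$ equals $a_i N_i(a)$, which is nonzero if and only if $a_i$ is; hence $w(\theta(f)) = w(f)$, and $K$-linearity of $\theta$ then gives $d(\theta(f),\theta(g)) = w(\theta(f-g)) = w(f-g) = d(f,g)$.

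There is no real obstacle: the proof is essentially a verification that the universal property of $K[x;\sigma]$ applies, and the crucial identity $(ay)^n = N_n(a) = \lambda$ is exactly why the norm hypothesis on $\lambda$ was assumed. The only point that deserves care is checking the skew relation under $\widetilde{\theta}$, which works precisely because the substituted scalar $a$ multiplies on the left of $y$ and commutes with $\sigma(b)$ in $K$.
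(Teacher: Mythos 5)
Your proof is correct and follows essentially the same route as the paper: the paper simply asserts that the homomorphism property is easy to check, notes $N_i(a)\neq 0$ for the isometry, and exhibits the same explicit inverse $\sum a_i y^i \mapsto \sum a_i N_i(a^{-1})x^i$ that your substitution $y\mapsto a^{-1}x$ produces. Your use of the universal property of $K[x;\sigma]$ and the identity $(ay)^n=N_n(a)=\lambda$ is just a fuller write-up of that same verification.
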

\begin{proof}
It is easy to check that $\theta$ is a homomorphism of $K^\sigma$--algebras. It is also an isometry since $N_i(a)\neq 0$, because the norm of $a$ is $\lambda$ (which is nonzero). 
The inverse of $\theta$ is the  map
$$\theta^{-1}: \sum_{i=0}^{n-1}a_iy^i\mapsto \sum_{i=0}^{n-1}a_iN_i(a^{-1})x^i.$$
\end{proof}

First we consider the case when $K=\F_q(t)$. The map $\theta^{-1}$ provides an easy way to construct codes of designed distance $\delta$ from skew codes. First we construct a skew Reed-Solomon code of designed distance $\delta$ in $\B=\F_q(t)[y,\sigma]/(y^n-1)$ (using the method from \cite{GLN2}). Let this code be $C$. By Proposition \ref{map}, $\theta^{-1}(C)$ is a skew-constacyclic code in $\A=\F_q(t)[x,\sigma]/(x^n-\lambda)$. The only thing we need is to be able to solve the norm equation $N_{\F_q(t)|\F_q(t)^{\sigma}}(a)=\lambda$. This can be done using the algorithm from \cite{IKR} since $\F_q(t)^{\sigma}$ is isomorphic to $\F_q(t)$ (by Lüroth's theorem) and such an isomorphism can be computed by the method of \cite{GRS}. The decoding procedure from \cite{GLN2} can also be adjusted. You receive an element $m$ in $\A$. Then apply $\theta$ to $m$ and decode it in $\B$ as $c\in\B$. Finally $\theta^{-1}(c)$ is the decoding of $m$. Naturally, these codes will also be MDS. We summarize these observations in a theorem:
\begin{theorem}
Let $\A=\F_q(t)[x,\sigma]/(x^n-\lambda)$, where $\lambda$ is a norm in the extension $\F_q(t)|\F_q(t)^{\sigma}$. Then there exists a randomized polynomial time algorithm which computes skew-constacyclic MDS codes of designed distance $\delta$ and there also exists a polynomial time decoding algorithm for these codes.
\end{theorem}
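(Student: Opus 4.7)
The strategy is already foreshadowed by Proposition \ref{map} and the paragraph preceding the statement: the map $\theta$ is an algebra isomorphism $\A\to\B=\F_q(t)[y,\sigma]/(y^n-1)$ that is also a Hamming isometry, so it transports the existing MDS machinery for skew cyclic codes in $\B$ (constructed in \cite{GLN2}) to the constacyclic setting in $\A$. The whole proof therefore reduces to exhibiting the map $\theta$ algorithmically, and then composing it with the known construction and decoder.

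The plan is as follows. First, I would compute an element $a\in\F_q(t)$ with $N_{\F_q(t)|\F_q(t)^\sigma}(a)=\lambda$. By L\"uroth's theorem, $\F_q(t)^\sigma$ is a rational function field in one variable, and \cite{GRS} supplies an effective isomorphism $\F_q(t)^\sigma\cong\F_q(s)$ for a computable generator $s$. Once this isomorphism is explicit, the norm equation $N(a)=\lambda$ becomes a norm equation in the cyclic extension $\F_q(s)\hookrightarrow\F_q(t)$, and the randomized polynomial time norm-equation solver from \cite{IKR} applies. With such an $a$ in hand, Proposition \ref{map} gives the explicit formulas for $\theta$ and $\theta^{-1}$ on the natural $\F_q(t)$-basis $1,x,\dots,x^{n-1}$ and $1,y,\dots,y^{n-1}$.

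Second, using the algorithm of \cite{GLN2}, I would construct a skew Reed--Solomon code $C\subseteq \B$ of designed Hamming distance $\delta$; by the results of that paper, $C$ is MDS and admits a polynomial time decoder. I then define the constacyclic code in $\A$ to be $C':=\theta^{-1}(C)$. Since $\theta$ is an algebra isomorphism and $C$ is a left ideal of $\B$, $C'$ is a left ideal of $\A$, hence a skew-constacyclic code. Because $\theta$ is a Hamming isometry (Proposition \ref{map}), $C'$ has exactly the same weight distribution as $C$; in particular it has minimum distance $\delta$ and is MDS.

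Third, the decoding algorithm for $C'$ is obtained by conjugation: given a received word $m\in\A$, compute $\theta(m)\in\B$, run the \cite{GLN2} decoder to produce the nearest codeword $c\in C$, and return $\theta^{-1}(c)\in C'$. The isometry property guarantees that nearest-codeword decoding is preserved, and each step (evaluating $\theta$, decoding in $\B$, evaluating $\theta^{-1}$) runs in polynomial time. The only step that is not entirely routine is the first one, the effective solution of the norm equation, which is the main technical obstacle; however it is isolated into the two cited subroutines from \cite{GRS} and \cite{IKR}, so assembling them as described completes the proof.
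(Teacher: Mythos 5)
Your proposal is correct and matches the paper's own argument essentially step for step: solve the norm equation $N_{\F_q(t)|\F_q(t)^{\sigma}}(a)=\lambda$ via the L\"uroth isomorphism computed by \cite{GRS} together with the solver from \cite{IKR}, transport the skew Reed--Solomon codes of \cite{GLN2} through the isometric isomorphism $\theta^{-1}$ of Proposition \ref{map}, and decode by applying $\theta$, decoding in $\B$, and pulling back with $\theta^{-1}$. Nothing essential is missing.
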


These results imply that the norm case is closely related to the skew-cyclic case. The following proposition is a slight generalization of \cite[Theorem 4]{GLN2} which will be needed when dealing with a $\lambda$ which is not a norm. For $f_1, \dots, f_t \in K[x;\sigma]$, the notation $[f_1,\dots,f_t]_\ell$ stands for the least common left multiple of $f_1, \dots, f_t$. 

\begin{proposition}\label{estimate}
Let $K$ be a finite extension of $\F_q(t)$ and let $\A=K[x;\sigma]/(x^n-1)$. Let $\alpha$ generate a normal basis of the extension $K|K^{\sigma}$ and let $\beta=\alpha^{-1}\sigma(\alpha)$. Let $m$ be a divisor of $n$. Then the code generated by 
$$[x-\beta,x-\sigma^m(\beta),\dots,x-\sigma^{m(k-2)}(\beta)]_l$$
has Hamming distance at least $k$.
\end{proposition}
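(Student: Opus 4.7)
The plan is to follow the Vandermonde/Moore approach of \cite[Theorem 4]{GLN2}, adapting it to the $\sigma^m$-conjugate setting. First, I would translate codeword membership into remainder-evaluation conditions: if $f \in \A g$ with $g = [x-\beta, x-\sigma^m(\beta), \dots, x-\sigma^{m(k-2)}(\beta)]_\ell$, then each $x - \sigma^{mi}(\beta)$ is a right factor of $g$ and hence of $f$, so $f(\sigma^{mi}(\beta)) = 0$ in the sense of skew remainder evaluation for $i = 0, 1, \dots, k-2$. For $f = \sum_j c_j x^j$, the standard evaluation formula gives $f(b) = \sum_j c_j N_j(b)$, where $N_j(b) = b\sigma(b)\cdots\sigma^{j-1}(b)$ is the $j$-th norm already introduced in the preceding subsection.

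Second, I would exploit the specific form $\beta = \alpha^{-1}\sigma(\alpha)$ to telescope the norm:
$$
N_j(\sigma^{mi}(\beta)) \;=\; \prod_{l=0}^{j-1}\sigma^{mi+l}(\alpha)^{-1}\sigma^{mi+l+1}(\alpha) \;=\; \sigma^{mi}(\alpha)^{-1}\sigma^{mi+j}(\alpha).
$$
Suppose $f$ is a nonzero codeword of Hamming weight $w < k$ with support $J \subseteq \{0, 1, \dots, n-1\}$ and nonzero coefficients $c_j \in K$ for $j \in J$. Multiplying the $i$-th evaluation equation by $\sigma^{mi}(\alpha)$ produces the homogeneous linear system
$$
\sum_{j \in J} c_j\, \sigma^{mi}\bigl(\sigma^j(\alpha)\bigr) \;=\; 0 \qquad (i = 0, 1, \dots, k-2),
$$
a $(k-1)\times w$ Moore-type (skew Vandermonde) system in the automorphism $\sigma^m$ evaluated on the vectors $v_j := \sigma^j(\alpha)$.

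The third step is the classical inductive Moore/Wronskian argument. After normalizing some $c_{j_0} = 1$, one replaces the $i$-th equation by $\sigma^m$ of itself minus the $(i+1)$-th; the new coefficients $d_j = \sigma^m(c_j) - c_j$ vanish at $j_0$, reducing to $w-1$ unknowns while preserving the $\sigma^m$-Vandermonde shape on $k-2 \ge w-1$ equations. Iterating forces every $c_j$ into the fixed field $K^{\sigma^m}$, at which point the $i = 0$ equation becomes a $K^{\sigma^m}$-linear relation $\sum_{j \in J} c_j v_j = 0$.

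The main obstacle is closing this last step: it requires $\{\sigma^j(\alpha) : j \in J\}$ to be linearly independent over $K^{\sigma^m}$, whereas the hypothesis only guarantees independence over the smaller field $K^\sigma \subseteq K^{\sigma^m}$. My plan to bridge this gap is to show that $\alpha$ also generates a normal basis of $K/K^{\sigma^m}$ under the action of $\sigma^m$: given any candidate $K^{\sigma^m}$-dependence, one expands its coefficients through a $K^\sigma$-basis of $K^{\sigma^m}$ and uses $\sigma^m$-invariance of these coefficients together with the given normal basis of $K/K^\sigma$ to recast the relation as a $K^\sigma$-linear combination of elements $\sigma^l(\alpha)$, which must vanish by hypothesis. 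Together with the Moore induction, this forces $c_j = 0$ for all $j \in J$, contradicting $f \ne 0$, and yields Hamming distance at least $k$.
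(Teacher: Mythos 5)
Your reduction is the right skeleton, and it is exactly the argument the paper has in mind (its ``proof'' is a one-line appeal to \cite[Theorem 4]{GLN2}, which is the case $m=1$): the evaluation criterion $f(b)=\sum_j c_jN_j(b)$, the telescoping $N_j(\sigma^{mi}(\beta))=\sigma^{mi}(\alpha)^{-1}\sigma^{mi+j}(\alpha)$, and the passage to a $\sigma^m$-Moore system on the vectors $v_j=\sigma^j(\alpha)$, $j\in J$, are all correct. The gap is the final step, which you yourself flag: the Moore argument needs every subset $\{\sigma^j(\alpha):j\in J\}$ with $|J|\le k-1$ to be linearly independent over $K^{\sigma^m}$, and your proposed bridge does not deliver this. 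Expanding the $K^{\sigma^m}$-coefficients in a $K^{\sigma}$-basis of $K^{\sigma^m}$ does not produce a $K^{\sigma}$-relation among the $\sigma^{l}(\alpha)$, because the products (basis element of $K^{\sigma^m}$)$\cdot\sigma^{j}(\alpha)$ are not themselves conjugates of $\alpha$; and the classical fact that a normal generator of $K|K^{\sigma}$ remains normal for $K|K^{\sigma^m}$ only gives independence of the orbit $\{\sigma^{mj}(\alpha)\}_{0\le j<n/m}$, whereas the support $J$ of a codeword is an arbitrary subset of $\{0,\dots,n-1\}$. For $m=1$ the needed independence is immediate from the normal basis hypothesis, which is why the argument of \cite{GLN2} closes; for $m>1$ it is a genuinely stronger statement.

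In fact the statement you would need is false under the stated hypotheses, so the plan cannot be completed as written. Take $K=\F_{64}(t)$, $\sigma$ the extension of the Frobenius $a\mapsto a^2$ of $\F_{64}$ with $\sigma(t)=t$, so $n=6$, $K^{\sigma}=\F_2(t)$, $K^{\sigma^2}=\F_4(t)$, and take $m=2$. Choose $\alpha\in\F_{64}$ of multiplicative order $21$ with $\mathrm{Tr}_{\F_{64}/\F_4}(\alpha)\neq 0$; such $\alpha$ exists because the sum of the primitive $21$st roots of unity equals $\mu(21)=1\neq 0$ in characteristic $2$, so some irreducible cubic factor of $\Phi_{21}$ over $\F_4$ has nonzero quadratic coefficient. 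Writing $c:=\alpha^{7}\in\F_4\setminus\F_2$, one has $\sigma^3(\alpha)=c\,\alpha$, so the $\F_2$-span of the six conjugates of $\alpha$ is $\F_4\alpha+\F_4\sigma(\alpha)+\F_4\sigma^2(\alpha)$, which equals $\F_{64}$ exactly because the trace to $\F_4$ is nonzero; hence $\alpha$ is a normal generator of $K|K^{\sigma}$. Yet $\alpha$ and $\sigma^{3}(\alpha)$ are dependent over $K^{\sigma^2}$, and concretely, with $\beta=\alpha^{-1}\sigma(\alpha)=\alpha$, the element $f=x^{3}+c$ satisfies $N_3(\sigma^{2i}(\beta))=\sigma^{2i}(c)=c$ for all $i$, so $f$ is right-divisible by every $x-\sigma^{2i}(\beta)$ and therefore lies in the code of the proposition for every $k$, while having Hamming weight $2$. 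So the independence over $K^{\sigma^m}$ cannot be derived from normality over $K^{\sigma}$; closing your argument requires either restricting to $m=1$ (the quoted Theorem 4 of \cite{GLN2}) or adding hypotheses on $\alpha$, $m$ and the supports beyond what the proposition states — a point the paper's one-line proof also glosses over.
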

\begin{proof}
The same proof as the proof of Theorem 4 in \cite{GLN2} applies. 
\end{proof}

Moreover, such a code can also be decoded by the same algorithm as described in \cite{GLN2}.

\subsection{The case where $\lambda$ is not a norm}

In this section, we deal with the case where $\lambda$ is not a norm. We assume we know an explicit algebra isomorphism between $\A=\F_q(t)[x,\sigma]/(x^n-\lambda)$ and $M_{n/m}(D)$ where $D$ is a division algebra of index $m$ over $\F_q(t)^{\sigma}$.  Such an isomorphism can be computed by means of the algorithms from Section \ref{sec:isomorphism}. 

The following theorem provides an orthogonal system of primitive idempotents adapted to our purposes. First note that $\sigma$, when applied coefficientwise to an Ore polynomial, is an automorphism of $\A$. By an abuse of notation we will denote this automorphism also by $\sigma$.
\begin{theorem}
Let $\A=\F_q(t)[x,\sigma]/(x^n-\lambda)$ and assume that $\lambda$ is not an $r$th power for every $r$ dividing $n$, and that $(q,n)=1$. Let $m$ be the index of $\A$. Suppose we have an isomorphism between $\A$ and $M_{n/m}(D)$ where $D$ is the division algebra Brauer equivalent to $\A$. Then there exists a randomized polynomial time algorithm which finds a primitive idempotent $e_0$ such that $e_0,\sigma^m(e_0),\dots,\sigma^{n-m}(e_0)$ is an orthogonal system of primitive idempotents in $\A$.  
\end{theorem}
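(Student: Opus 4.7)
The plan is to transport the problem to $M_{n/m}(D)$ via $\phi$ and locate the required idempotent by a Skolem--Noether argument. We first observe that the automorphism $\sigma$ of $\A = K[x;\sigma]/(x^n-\lambda)$ (applied coefficientwise, fixing $x$) agrees with inner conjugation by $x$: indeed $x \cdot (\sum a_i x^i) \cdot x^{-1} = \sum \sigma(a_i) x^i$ using $x^{-1} = \lambda^{-1} x^{n-1}$. Hence $\sigma^m$ is conjugation by $x^m$ on $\A$, and transporting via $\phi$ it becomes conjugation by $U := \phi(x^m) \in M_{n/m}(D)^\times$. Because $x^n = \lambda$ is central, $U^{n/m} = \lambda I$ is central.

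Let $V = D^{n/m}$ denote the simple left $M_{n/m}(D)$-module, viewed as a right $D$-module of rank $n/m$. Primitive idempotents of $M_{n/m}(D) = \mathrm{End}_D(V)$ correspond to right $D$-lines in $V$, and complete orthogonal systems of $n/m$ primitive idempotents correspond to direct-sum decompositions of $V$ into $n/m$ right $D$-lines. Requiring $\{U^i e_0 U^{-i}\}_{i=0}^{n/m - 1}$ to form such a system is thus equivalent to finding $v_0 \in V$ with $e_0 V = v_0 D$ such that $v_0, U v_0, \ldots, U^{n/m - 1} v_0$ constitute a right $D$-basis of $V$. We call such a $v_0$ a $D$-cyclic vector for $U$.

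Existence of a $D$-cyclic vector follows from Skolem--Noether. The hypothesis that $\lambda$ is not an $r$-th power in $\F_q(t)$ for any $r \mid n$ with $r > 1$, combined with $(q,n) = 1$, implies via Capelli's criterion that $T^{n/m} - \lambda$ is irreducible over $L := \F_q(t)^\sigma$ (the exceptional Capelli case $\lambda = -4c^4$ when $4 \mid n/m$ requires a short separate argument). Thus the minimal polynomial of $U$ over $L$ is exactly $T^{n/m} - \lambda$, so $L[U] \subset M_{n/m}(D)$ is a subfield of degree $n/m$. The Skolem--Noether theorem, applied to the two $L$-algebra embeddings $T \mapsto U$ and $T \mapsto U_0$ of $L[T]/(T^{n/m} - \lambda)$ into $M_{n/m}(D)$, where $U_0$ is the companion matrix of $T^{n/m} - \lambda$ viewed in $M_{n/m}(L) \subset M_{n/m}(D)$, yields $g \in M_{n/m}(D)^\times$ with $g U g^{-1} = U_0$. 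Since $U_0^i e_1 = e_{i+1}$ for $0 \le i < n/m$, the standard basis vector $e_1$ is trivially a $D$-cyclic vector for $U_0$, and hence $v_0 := g^{-1} e_1$ is a $D$-cyclic vector for $U$.

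The randomized polynomial-time algorithm bypasses the explicit computation of $g$: it samples $v_0 \in V$ at random (choosing the coordinates from a sufficiently large finite subset of $\F_q(t)$), computes $v_0, U v_0, \ldots, U^{n/m - 1} v_0$, and tests invertibility of the resulting matrix in $M_{n/m}(D)$ by Gaussian elimination over $D$ (this is polynomial-time since $D$ is given by structure constants, and inverses are found by solving linear systems over $\F_q(t)$). Because $D$-cyclicity is a Zariski-open non-empty condition, a Schwartz--Zippel-type bound ensures success after polynomially many retries. Once $v_0$ is found, one constructs $e_0 \in M_{n/m}(D)$ as the projection onto $v_0 D$ along $\bigoplus_{i=1}^{n/m-1} (U^i v_0) D$, and returns $\phi^{-1}(e_0) \in \A$. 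The main obstacle is the existence argument in the third paragraph, specifically the Skolem--Noether step and the treatment of the Capelli $-4c^4$ exception; once that is in place, the remaining steps are routine linear algebra in algebras given by structure constants.
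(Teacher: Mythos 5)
Your proposal is correct and takes essentially the same route as the paper: both arguments rest on the irreducibility of $y^{n/m}-\lambda$ over $\F_q(t)^{\sigma}$ and a Skolem--Noether conjugation of $x^m$ (your $U$) to the companion matrix of $y^{n/m}-\lambda$, with the idempotent obtained by transporting the standard one back; your $D$-cyclic-vector formulation with randomized sampling is just a repackaging of the paper's step of solving the linear system $x^mh=hs$ for an invertible $h$ and setting $e_0=hE_{11}h^{-1}$ (your direct application of Skolem--Noether to the two embeddings of $\F_q(t)^{\sigma}[T]/(T^{n/m}-\lambda)$ is in fact a streamlining of the paper's two-step subfield argument). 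Your flagging of the Capelli exception $\lambda\in -4K^4$ when $4\mid n/m$ concerns a point the paper itself passes over silently, so leaving it as a noted side case does not put you behind the paper's own proof.
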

\begin{proof}
We already have an isomorphism between $\A$ and $M_{n/m}(D)$ so, by an abuse of notation, we refer to $x$ as a matrix from $M_{n/m}(D)$. Let $s\in M_{n/m}(D)$ be the matrix with $\lambda$ in the bottom left corner, 1s over the diagonal and zero everywhere else (this is the usual companion matrix of the polynomial $y^{n/m}-\lambda$). Let $K=\F_q(t)^{\sigma}$. The minimal polynomial of $s$ and $x^m$ over $K$ is $y^{n/m}-\lambda\in K[y]$. The polynomial $y^{n/m}-\lambda$ is irreducible over $K$ because $\lambda$ is not an $r$th power by assumption for every $r$ dividing $n$. This implies that $K(s)$ and $K(x^m)$ are subfields of $\A$ which are isomorphic, thus, by the Noether-Skolem theorem, they are conjugate. This means that there exists an element $z\in K(x)$ which is a conjugate of $s$ and $z^{n/m}=\lambda$. Since $(n,q)=1$ there exists a field automorphism of $K(x)$ which maps $z$ to $x^m$. By the Noether-Skolem theorem this field automorphism is also realized by a conjugation. Finally we get that $s$ and $x^m$ are conjugates. An element $h$ can be computed by solving a system of linear equations for which $h^{-1}x^mh=s$. 

Let $f$ be the primitive idempotent in $M_{n/m}(D)$ having 1 in the top left corner and zero everywhere else. Then $f,s^{-1}fs,\dots, s^{1-n/m}fs^{n/m-1}$ is a complete orthogonal system of primitive idempotents. Since $h^{-1}x^mh=s$ we have that 
$$f,h^{-1}x^{-m}hfh^{-1}x^mh,\dots, (h^{-1}x^{-m}h)^{1-n/m}f(h^{-1}x^mh)^{n/m-1}$$
is a complete system of primitive orthogonal idempotents. It is now easy to see that choosing $e_0=hfh^{-1}$ suffices. 
\end{proof}

So, we will assume we are given a primitive  idempotent $e \in \A$ such that 
\[
e_0,\sigma^m(e_0),\dots,\sigma^{n-m}(e_0)
\]
 is an orthogonal system of primitive idempotents in $\A$. Let $e=1-e_0$. Now we are ready to define our code.

\begin{definition}
A skew Reed-Solomon constacyclic convolutional code of designed distance $k\leq \frac{n}{m}$ is defined as the code generated, as a left ideal, by
$$[e,\sigma^m(e),\dots,\sigma^{m(k-2)}(e)]_l. $$
\end{definition}

Now are goal is to justify the previous definition and show that the code has indeed Hamming distance at least $k$.
\begin{theorem}\label{main}
The code $C$ generated by $[e,\sigma^m(e),\dots,\sigma^{m(k-2)}(e)]_l$ has Hamming distance at least $k$ and it also admits a decoding algorithm which runs in polynomial time. 
\end{theorem}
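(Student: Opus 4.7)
My approach mirrors the proof of Proposition \ref{estimate} (the skew Reed--Solomon distance bound from \cite{GLN2}), with the primitive idempotents $\sigma^{mi}(e_0)$ playing the role of evaluation points in a non-commutative Vandermonde argument. First I would reformulate the code more explicitly. Since $\sigma(a)=xax^{-1}$ for $a\in K=\F_q(t)$, the automorphism $\sigma^m$ acts on $\A$ as conjugation by $x^m$, so $\sigma^{mi}(e_0)=x^{im}e_0 x^{-im}$. Combining the orthogonality of the system $\{\sigma^{mi}(e_0)\}_{i=0}^{n/m-1}$ with $\sum_{i=0}^{n/m-1}\sigma^{mi}(e_0)=1$, the LCLM of the elements $\sigma^{mi}(e)=1-\sigma^{mi}(e_0)$, understood inside the semisimple ring $\A$ as the idempotent generator of the intersection of left ideals $\bigcap_{i=0}^{k-2}\A\,\sigma^{mi}(e)$, is $E_{k-1}=\sum_{j=k-1}^{n/m-1}\sigma^{mj}(e_0)$; hence
\[
C \,=\, \A\, E_{k-1} \,=\, \{\,c\in\A : c\,\sigma^{mi}(e_0) = 0,\ i=0,\ldots,k-2\,\}.
\]

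For the distance bound, I would suppose toward a contradiction that $c=\sum_{j\in S}c_j x^j\in C$ is nonzero with $|S|\leq k-1$. The conditions $c\,\sigma^{mi}(e_0)=0$ translate (by right-multiplying by $x^{im}$) into $\sum_{j\in S}c_j\, x^{j+im}e_0 = 0$ in $\A$ for $i=0,\ldots,k-2$, a homogeneous $\F_q(t)$-linear system in the unknowns $(c_j)_{j\in S}$. Using the given explicit isomorphism $\A\cong M_{n/m}(D)$, conjugation by $x^m$ cyclically permutes the $n/m$ rank-one primitive idempotents $\sigma^{mj}(e_0)$, so the coefficient matrix of this system is a non-commutative block Vandermonde over $D$. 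Its non-singularity for $k-1\leq n/m-1$ rows and $|S|\leq k-1$ columns is the analogue of the character-independence argument used in Proposition \ref{estimate}, and forces $c=0$, contradicting the assumption; hence $d(C)\geq k$.

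The polynomial-time decoding algorithm follows the template of \cite{GLN2}. Given a received word $r=c+f$ with $\mathrm{wt}(f)\leq\lfloor(k-1)/2\rfloor$, one computes the ``idempotent syndromes'' $r\,\sigma^{mi}(e_0)$ for $i=0,\ldots,k-2$, lifts them to a key equation in the Ore polynomial ring $\F_q(t)[x;\sigma]$ via the canonical projection $\F_q(t)[x;\sigma]\to\A$, and solves this key equation by the extended Euclidean algorithm for Ore polynomials; the Vandermonde-type non-singularity established above guarantees that the error locator is unique and polynomial-time recoverable whenever the weight bound holds, whence the error (and thus $c$) can be extracted. The hard part is the second step: in Proposition \ref{estimate} the non-singularity of the relevant matrix reduces to Artin's linear independence of characters of the Galois extension $K|K^\sigma$, but here the analogous independence must be proved over the non-commutative division algebra $D$. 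I expect this to require a careful use of the structural results of the previous theorem (the explicit description of $e_0$ as $hfh^{-1}$ where $h^{-1}x^mh$ is the companion matrix of $y^{n/m}-\lambda$), together with the hypotheses $(q,n)=1$ and the irreducibility of $y^{n/m}-\lambda$ over $K^{\sigma^m}$.
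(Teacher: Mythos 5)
Your reformulation of the code as $\{c\in\A : c\,\sigma^{mi}(e_0)=0,\ i=0,\dots,k-2\}$ is a reasonable starting point (and the containment you need for a lower bound does hold), but the heart of the theorem --- the distance bound --- is exactly the ``non-commutative block Vandermonde'' non-singularity over $D$ that you do not prove. You state yourself that this independence statement over the division algebra $D$ ``must be proved'' and that you ``expect'' it to follow from the structure of $e_0$ and the hypotheses; in the field case the analogous step is Artin's independence of characters (this is what powers Proposition \ref{estimate} and the argument of \cite{GLN2}), but no such tool is available verbatim over $D$, and nothing in your sketch supplies a substitute. Since a short codeword is precisely a nontrivial solution of that linear system, asserting the non-singularity is logically equivalent to asserting the theorem, so the proposal has a genuine gap rather than a completed different proof. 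The same gap propagates to your decoder: the key-equation/syndrome argument again rests on the unproven uniqueness coming from that non-singularity.

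The paper takes a different route precisely to avoid this issue: it extends scalars to the splitting field $M$ of $s^n-\lambda$, where by Proposition \ref{embed} one can choose $\phi$ extending $\sigma$ so that $\lambda$ becomes a norm in $M|M^{\phi}$. Then $\A'=M[x,\phi]/(x^n-\lambda)$ is a full matrix algebra over $M^{\phi}$, Lemma \ref{maxideal} produces a maximal left ideal $\A'(x-\beta)$ containing $\A'e$, and the isometric isomorphism $\theta$ of Proposition \ref{map} carries $C$ into the skew Reed--Solomon code of $\B=M[y,\phi]/(y^n-1)$ generated by $\left[y-\gamma,\,y-\phi^{m}(\gamma),\dots,\,y-\phi^{m(k-2)}(\gamma)\right]_\ell$, so the distance bound follows from Proposition \ref{estimate} (where the commutative-field character argument is valid) and decoding is done in $\B$ with the algorithm of \cite{GLN2} and pulled back through $\theta^{-1}$. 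If you want to salvage your direct approach, you would have to prove the rank statement over $D$ from scratch; the scalar-extension argument is the paper's way of reducing it to the already established field case.
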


The first key idea of the proof is the construction of an isometric embedding of $\A=\F_q(t)[x,\sigma]/(x^n-\lambda)$ into $\A'=M[x,\phi]/(x^n-\lambda)$ where $M$ is the splitting field of the polynomial $s^n-\lambda\in\F_q(t)[s]$ and $\phi$ is an automorphism of $M$ which restricted to $L$ is $\sigma$. 

\begin{proposition}\label{embed}
Let $\sigma$ be an automorphism of $\F_q(t)$ of order $n$. Let $\lambda\in\F_q(t)^{\sigma}$ and let $M$ be the splitting field of the polynomial $s^n-\lambda\in\F_q(t)[s]$. Then there exists an automorphism $\phi$ of $M$ with the following properties:
\begin{enumerate}
    \item $\phi$ restricted to $\F_q(t)$ is $\sigma$ and $\phi$ has order $n$,
    \item $\lambda$ is a norm in the extension $M|M^{\phi}$.
\end{enumerate}
\end{proposition}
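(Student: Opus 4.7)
The plan is to extend $\sigma$ to an automorphism $\phi$ of $M$ that fixes some chosen root $\mu$ of $s^n - \lambda$. Under this choice, property (2) follows immediately from the identity $N_{M/M^\phi}(\mu) = \prod_{k=0}^{n-1}\phi^k(\mu) = \mu^n = \lambda$, and it remains to check property (1), i.e.\ that $\phi$ has order exactly $n$. Write $L = \F_q(t)$, $F = L^\sigma$, and fix a primitive $n$-th root of unity $\zeta$, so that $M = L(\mu, \zeta)$.

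First I would produce an extension $\phi$ with $\phi(\mu) = \mu$. The extension $L/F$ is cyclic Galois of degree $n$ generated by $\sigma$, and $F(\mu, \zeta)/F$ is Galois as the splitting field of a separable polynomial over $F$, so their compositum $M/F$ is also Galois. By the Galois correspondence, the restriction map $\mathrm{Gal}(M/F(\mu)) \to \mathrm{Gal}(L/F)$ has image $\mathrm{Gal}(L/L\cap F(\mu))$; hence an extension of $\sigma$ fixing $\mu$ exists iff $\sigma$ fixes $L\cap F(\mu)$, which, since $\sigma$ generates the whole Galois group $\mathrm{Gal}(L/F)$, is equivalent to $L\cap F(\mu) = F$. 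Under the ambient assumption inherited from the preceding theorem that $\lambda$ is not a nontrivial power in $L$, the polynomial $s^n - \lambda$ is irreducible over both $F$ and $L$, so $[F(\mu):F] = [L(\mu):L] = n$. A degree count via $[L(\mu):F] = n^2 = [L:F][F(\mu):F]/[L\cap F(\mu):F]$ then forces $[L\cap F(\mu):F] = 1$, giving the required linear disjointness.

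Finally, I would adjust $\phi$ so that it has order exactly $n$ and verify the norm. Since $\phi|_L = \sigma$ has order $n$ and $\phi^n(\mu) = \mu$, the iterate $\phi^n$ fixes $L(\mu)$ pointwise and therefore lies in the cyclotomic group $\mathrm{Gal}(M/L(\mu))$. The set of extensions of $\sigma$ fixing $\mu$ is a coset of this group inside $\mathrm{Gal}(M/F(\mu))$, and I would choose a representative for which $\phi^n(\zeta) = \zeta$ as well. The hardest part is showing that this is always possible: when $\zeta \in L$ it is automatic, because $\phi(\zeta) = \sigma(\zeta)$ is forced and $\sigma^n = \mathrm{id}$; when $\zeta \notin L$, it requires a direct analysis of the cyclotomic extension $L(\zeta)/L$, whose Galois group sits inside $(\Z/n\Z)^*$, to confirm that the coset contains a representative of the correct order. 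Once $\phi$ is chosen in this way, $\phi^n$ acts trivially on the generators $L$, $\mu$, $\zeta$ of $M$, so $\phi^n = \mathrm{id}$ and $|\phi| = n$; the norm identity $N_{M/M^\phi}(\mu) = \mu^n = \lambda$ then concludes the proof.
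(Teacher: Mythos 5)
You and the paper share the same basic idea: extend $\sigma$ to $M$ so that it fixes a chosen root $\mu$ of $s^n-\lambda$, so that $N_{M|M^{\phi}}(\mu)=\mu^{n}=\lambda$ comes for free, and the whole content is then to produce such a $\phi$ of order exactly $n$. The paper does this by an explicit construction: $\phi$ acts as $\sigma$ on $\F_q(t)$, fixes $\lambda^{1/n}$, and, when $\F_q$ does not contain the $n$th roots of unity, also fixes the constant field extension $\F_r=\F_q(\zeta)$, which makes the order statement and the norm identity immediate. Your proposal replaces this by an abstract Galois-theoretic existence argument, and precisely at the decisive point you stop: in the case $\zeta\notin\F_q$ you only assert that ``a direct analysis of the cyclotomic extension $L(\zeta)/L$'' would confirm that the coset of extensions of $\sigma$ fixing $\mu$ contains an element of order exactly $n$, and you never carry out that analysis. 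This is not a routine verification. Any extension $\phi_0$ of $\sigma$ with $\phi_0(\mu)=\mu$ stabilizes $L(\mu)$ and hence normalizes $H=\mathrm{Gal}(M/L(\mu))$; moreover $H$ embeds into the automorphism group of the constant field of $M$, which is abelian, so $H$ is in fact centralized by $\phi_0$. Consequently $(\phi_0\eta)^n=\phi_0^{\,n}\eta^{\,n}$ for $\eta\in H$, and the coset $\phi_0H$ contains an element of order $n$ if and only if $\phi_0^{\,n}$ (which lies in $H$) is an $n$th power in the cyclic group $H$. Establishing that divisibility statement --- for instance by showing that $\sigma$ admits an extension fixing $\mu$ and the constant field simultaneously, which is in effect what the paper's construction does --- is the heart of the proposition, and your proposal leaves it as an acknowledged blank; as written, the proof is incomplete.

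A secondary point concerns scope. The proposition is stated for an arbitrary $\lambda\in\F_q(t)^{\sigma}$, while your linear-disjointness argument needs $s^n-\lambda$ to be irreducible over both $F$ and $L$ and therefore imports the hypothesis, borrowed from the preceding theorem, that $\lambda$ is not a nontrivial power. That restriction is harmless for the application in Theorem \ref{main}, but it means you prove less than what is stated (the paper's argument allows $[M:\F_q(t)]<n$). Note also that ``$\lambda$ is not a $p$th power for any prime $p\mid n$'' gives irreducibility of $s^n-\lambda$ only together with the additional condition $\lambda\notin-4L^{4}$ when $4\mid n$; this caveat should be addressed or shown to be irrelevant in your setting.
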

\begin{proof}
We distinguish two cases. First assume that $\F_q$ contains the $n$th roots of unity. Then $M=\F_q(t)(\lambda^{\frac{1}{n}})$. The field $M$ admits an $\F_q(t)$-basis $1,\lambda^{\frac{1}{n}},\dots,\lambda^{\frac{l}{n}}$ where $l=\frac{n}{d}$ (if $d$ is the largest positive integer for which $\lambda$ is a $d$th power where $d$ divides $n$). Then consider the following map:
$$\phi: \mu_0+\mu_1\lambda^{\frac{1}{n}}+\dots+\mu_k\lambda^{\frac{l}{n}}\mapsto \sigma(\mu_0)+\sigma(\mu_1)\lambda^{\frac{1}{n}}+\dots+\sigma(\mu_k)\lambda^{\frac{l}{n}}.$$
The map $\phi$ is an automorphism of $M$ since $\lambda$ is fixed by $\sigma$. Also $\phi$ has order $n$ since its $n$th power is the identity and restricted $\F_q(t)$ it is $\sigma$ which has order $n$ (as an automorphism of $\F_q(t)$). Finally, since $\lambda^{\frac{1}{n}}$ is fixed by $\phi$, we have that $\lambda$ is the norm of $\lambda^{\frac{1}{n}}$ in the extension $M|M^{\phi}$.

Now assume that $\F_q$ does not contain the $n$th roots of unity. Then $M=\F_r(t)(\lambda^{\frac{1}{n}})$ where $\F_r$ is an extension of $\F_q$ by a primitive $n$th root of unity. In this case we first extend $\sigma$ to $\F_r(t)$ in a natural way (the image of $t$ is exactly the same as in $\F_q(t)$). This fixes $\F_r$. Then we extend in the exact same fashion as in the previous case. 
\end{proof}

Proposition \ref{embed} gives us an isometric embedding of $\A=\F_q(t)[x,\sigma]/(x^n-\lambda)$ into $\A'=M[x,\phi]/(x^n-\lambda)$. Actually $\A'$ naturally contains $\A$. The important observation is that $\A'$ is now a full matrix algebra over the field $M^{\phi}$. 

Let $C$ be the code generated by $[e,\sigma^m(e),\dots,\sigma^{m(k-2)}(e)]_l$. Now the element $e$ is contained in $\A'$ as well. Consider the left ideal $L$ of $\A'$ generated by $e$. 
\begin{lemma}\label{maxideal}
The left ideal $L$ is contained in a maximal left ideal generated by $x-\beta$ and such a $\beta$ can be computed in polynomial time. 
\end{lemma}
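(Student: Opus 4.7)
The plan is to transport the problem from $\A'$ to the split cyclic algebra $\B' = M[y;\phi]/(y^n - 1)$ via the isometry $\theta$ of Proposition \ref{map}, using $a = \lambda^{1/n}$ as the element with $N(a) = \lambda$ (this works because $\phi$ fixes $\lambda^{1/n}$ by the construction in the proof of Proposition \ref{embed}). Inside $\B'$ the question becomes a concrete linear algebra problem: $\B'$ is split and acts faithfully on the simple module $M$, with $y$ acting as $\phi$ and $c\in M$ acting as multiplication by $c$.

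First I would compute $\theta(e) = \sum_i c_i y^i$ from the given coefficients of $e$ via the explicit formula of Proposition \ref{map}. Since $\theta(e)$ is a proper idempotent of $\B' \cong M_n(M^\phi)$ (proper because $e \neq 1$), its kernel on $M$ must be nonzero. Fixing an $M^\phi$-basis of $M$, I would set up and solve the $M^\phi$-linear system $\sum_i c_i \phi^i(v) = 0$ to produce a nonzero $v \in M$ with $\theta(e)\cdot v = 0$.

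With $v$ in hand, I would set $\gamma := \phi(v)\,v^{-1} \in M$. A telescoping computation shows $N(\gamma) = \phi^n(v)/v = 1$, so $y - \gamma$ is a right divisor of $y^n - 1$ in $M[y;\phi]$, making $\B'(y - \gamma)$ a maximal left ideal of $\B'$. Because $(y - \gamma)\cdot v = \phi(v) - \gamma v = 0$, the annihilator $\mathrm{Ann}_{\B'}(v)$ contains $\B'(y - \gamma)$, and, being itself maximal since $M$ is simple, the two must coincide. Since $\theta(e)\cdot v = 0$, it follows that $\B'\theta(e) \subseteq \B'(y - \gamma)$.

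Pulling back by $\theta^{-1}$ then gives $L = \A' e \subseteq \A'(a^{-1}x - \gamma) = \A'(x - a\gamma)$, and I would take $\beta := a\gamma = \lambda^{1/n}\gamma$. Each step reduces to linear algebra over $M^\phi$ or to field arithmetic in $M$ of polynomial-size input, so the whole procedure is polynomial time. The step I expect to require the most care is writing $M$ and $\phi$ down explicitly in the second case of Proposition \ref{embed}, where $M = \F_r(t)(\lambda^{1/n})$ involves a preliminary roots-of-unity extension $\F_r/\F_q$ and one must keep track of $M^\phi$ and of a working $M^\phi$-basis of $M$; once such a presentation is in place, the rest of the argument is uniform across both cases of Proposition \ref{embed}.
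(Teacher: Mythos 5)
Your proof is correct, but it follows a genuinely different route from the paper's. The paper first uses the rank-one element $x-\mu$ (with $N(\mu)=\lambda$) to build an explicit isomorphism $\A'\cong M_n(M^{\phi})$, then diagonalizes the idempotent $e$, writes down in matrix form a maximal left ideal containing the diagonalized idempotent, conjugates back, and only afterwards extracts a generator of the special shape $x-\beta$ by a dimension count: a maximal left ideal has $M^{\phi}$-dimension $n(n-1)$, the subspace $M+Mx$ has dimension $2n$, so they meet in an element $a_1x+a_2$ with $a_1\neq 0$. You instead push $e$ through the isometry $\theta$ of Proposition \ref{map} (with $a=\lambda^{1/n}$, legitimate since $\phi$ fixes $\lambda^{1/n}$ by Proposition \ref{embed}) into the split algebra $\B'=M[y;\phi]/(y^n-1)$, use its natural simple module $M$ (with $y$ acting as $\phi$), find a nonzero kernel vector $v$ of the proper idempotent $\theta(e)$ by one semilinear system, and read off the generator in closed form, $\beta=\lambda^{1/n}\phi(v)v^{-1}$. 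This buys a shorter algorithm (no diagonalization of $e$, no separate intersection argument, the generator comes out already in the form $x-\beta$) and makes transparent that $N(\beta)=\lambda$, a fact implicitly needed when $\beta$ is reused in the proof of Theorem \ref{main}; the paper's route, on the other hand, stays within the explicit-isomorphism machinery it has already set up and never needs to discuss the module $M$. Two small points in your write-up carry real weight and deserve a line of justification, though both are standard and are used tacitly by the paper as well: that $M$, with the stated action, is the (unique) simple $\B'$-module, and that a degree-one right divisor $y-\gamma$ of the central polynomial $y^n-1$ generates a left ideal of $M^{\phi}$-codimension $n$, hence maximal --- this maximality is exactly what turns $\theta(e)\in\mathrm{Ann}_{\B'}(v)$ into $\theta(e)\in\B'(y-\gamma)$, since containment in the maximal ideal $\mathrm{Ann}_{\B'}(v)$ alone would not suffice. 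Your caveat about presenting $M$, $\phi$ and a working basis over $M^{\phi}$ (or over the computable field $\F_q(t)^{\sigma}$) explicitly, especially in the second case of Proposition \ref{embed}, is at the same level of detail as the paper's own treatment, so the polynomial-time claim stands as stated.
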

\begin{proof}
First we show that if we already have a maximal left ideal $I$ containing $L$, then we can compute $\beta$. A maximal left ideal has dimension $n(n-1)$ over $M^{\phi}$. The $M$-subspace generated by $1$ and $x$ has dimension $2n$ over $M^{\phi}$. Thus these two subspaces have a nontrivial intersection (a nonzero  intersecting element is of the form $a_1x+a_2$, where $a_1,a_2\in M$ and $a_1\neq 0$ since otherwise it would be invertible). Now we proceed by proposing an algorithm for finding a maximal left ideal containing $e$. Since we have an element (the element $\lambda^{\frac{1}{n}}$) in $M$ whose norm is $\lambda$ in the extension $M|M^{\phi}$ we can compute an explicit isomorphism between $\A'$ and $M_n(M^{\phi})$ (if one has an element $\mu\in M$ whose norm is $\lambda$ , then $y-\mu$ is a rank 1 element in $\A'$). The element $e$ is diagonalizable with eigenvalues 0 and 1. We compute an eigenbasis and thus a diagonalisation. Let $geg^{-1}$ be the diagonal matrix with 0s and 1s in the diagonal. Let $w$ be a matrix where all the zeros in the diagonal of $geg^{-1}$ are switched to 1s except at one place. Then $w$ generates a maximal left ideal which contains $geg^{-1}$. This implies that the maximal left ideal $g^{-1}wg$ contains $e$. 
\end{proof}

Now we are ready to prove Theorem \ref{main}.
\begin{proof}[Proof of Theorem \ref{main}]
Let us consider the code $C$ generated by $[e,\sigma^m(e),\dots,\sigma^{m(k-2)}(e)]_l$. Let $\A'=M[x,\phi]/(x^n-\lambda)$ as defined in Proposition \ref{embed} and compute $\beta$ as described in Lemma \ref{maxideal}. Let $\alpha$ be an element in $M$ which generates a normal basis of the extension $M|M^{\phi}$. Let $a=\frac{\beta\alpha}{\phi(\alpha)}$ and let $\gamma=\phi(\alpha)\alpha^{-1}$. Now consider the embedding $\theta$ of $\A'$ into $B=M[y,\phi]/(y^n-1)$ defined by:
$$\theta: \sum_{i=0}^{n-1}a_ix^i\mapsto \sum_{i=0}^{n-1}a_iN_i(a)y^i.$$
The maximal left ideal of $\A'$ generated $x-\beta$ maps to the maximal left ideal $y-\frac{\beta}{a}=y-\gamma$. Thus the left ideal $C$ embeds isometrically into the left ideal of $\B$ generated by $$[y-\gamma,y-\phi^m(\gamma),\dots,\phi^{m(k-2)}]_\ell.$$ Proposition \ref{estimate} shows that the Hamming distance of $C$ is at least $k$ (as it is contained in a code which has Hamming distance at least $k$). Decoding also works now in a natural way. We decode the code in $\B$ (this is now a skew-cyclic RS-code). Then we compute its preimage via the map $\theta$ (the method for computing the inverse of $\theta$ is described in the previous subsection). 
\end{proof}

Theorem \ref{main} shows that these constacyclic codes are subcodes of skew-cyclic RS codes over extensions of $\F_q(t)$. The bound we prove on their Hamming distance is tight in the sense that if $\A$ is a division algebra then the Hamming distance of any constacyclic code is 1.


\begin{thebibliography}{MM}

\bibitem{BG}
G. B\"ockle, and D. Gvirtz. Division algebras and maximal orders for given invariants. LMS Journal of Computation and Mathematics 19.A (2016),  178--195.

\bibitem{Tapas}
A. M. Cohen, H. Cuypers, and H. Sterk, eds. Some tapas of computer algebra. Vol. 4. Springer Science and Business Media, 2013.

\bibitem{CIW}
A. M. Cohen, G. Ivanyos, D. B. Wales. Finding the radical of an algebra of linear transformations. Journal of Pure and Applied Algebra 117-118 (1997), 177-193. 
%
\bibitem{Draxl}
P. K. Draxl.  Skew fields. Cambridge University Press, 1983.

\bibitem{Forney:1970} G. D. Forney,  Convolutional codes I: Algebraic structure. IEEE
Trans. Inform. Theory 16(6) (1970),  720--738.


\bibitem{Gille/Szamuely:2006}
P. Gille, T. Szamuely. Central Simple Algebras and Galois Cohomology. Cammbridge Uniersity Press, 2006.
%
\bibitem{Gomez/alt:2016}
J. Gómez-Torrecillas, F. J. Lobillo, and G. Navarro. A new perspective of cyclicity in convolutional codes. IEEE Transactions on Information Theory 62.5 (2016), 2702-2706.
\bibitem{GLN2}
J. Gómez-Torrecillas, F. J. Lobillo, and G.Navarro. A Sugiyama-like decoding algorithm for convolutional codes. IEEE Transactions on Information Theory 63.10 (2017), 6216-6226.
\bibitem{Gomez/alt:2018a}
J.~G\'{o}mez-Torrecillas, F.~J. Lobillo, and G.~Navarro.
\newblock {Peterson-Gorenstein-Zierler} algorithm for skew {RS} codes.
\newblock {\em Linear and Multilinear Algebra},  66(3): 469--487, 2018.

\bibitem{GLN3} 
J. G\'omez-Torrecillas, F. J. Lobillo, and G. Navarro. {\em Computing the bound of an Ore polynomial. Applications to factorization. } Journal of Symbolic Computation, 92 (2019), 269--297.

\bibitem{GRS}
J. Gutierrez,  R. Rubio, and D. Sevilla. Unirational fields of transcendence degree one and functional decomposition. Proceedings of the 2001 international symposium on Symbolic and algebraic computation. ACM, 2001.
\bibitem{Ithesis}
G. Ivanyos.  Algorithms for algebras over global field; Ph. D. thesis, Hungarian Academy of Sciences 1996.
\bibitem{IKR}
G. Ivanyos, P. Kutas, and L. R\'onyai. Computing explicit isomorphisms with full matrix algebras over $\mathbb{F}_q(x)$. Foundations of Computational Mathematics, 18 (2018), 381–397.
\bibitem{IKR2}
G. Ivanyos, P. Kutas, and L. R\'onyai. Explicit equivalence of quadratic forms over $\F_q(t)$; Finite Fields and their Applications, 55 (2019),  33--63.
\bibitem{IRS}
G. Ivanyos,  L. Rónyai, and J. Schicho. Splitting full matrix algebras over algebraic number fields; Journal of Algebra 354 (2012), 211-223.

\bibitem{Johannesson/Zingangirov:1999}
R. Johannesson and K. Zigangirov,
\newblock \emph{Fundamentals of Convolutional Coding}. 
\newblock IEEE Series on Digital \& Mobile Communication, IEEE Press, New York, 1999.
\bibitem{Reiner}
I. Reiner. Maximal orders. Academic press, London 1975.

\bibitem{Rosenthal/Smarandache:1999}
J. Rosenthal and R. Smarandache,   
Maximum distance separable convolutional codes. 
 Applicable Algebra in Engineering, Communication and Computing 10 (1999), 15--32.

\bibitem{Vigneras}
M-F. Vign\'eras: Arithm\'etique des Alg\`{e}bres de Quaternions;
Springer, LNM 800 (1980).

\bibitem{Wan}
D. Wan. Generators and irreducible polynomials over finite fields.
Mathematics of  Computation 66 (1997), 1195-1212.

\end{thebibliography}
\end{document}